\numberwithin{equation}{section}
\theoremstyle{definition}
\newtheorem{defn}{Definition}[section]
\newtheorem{ex}[defn]{Example}
\newtheorem{ques}[defn]{Question}
\theoremstyle{plain}
\newtheorem{prop}[defn]{Proposition}
\newtheorem{conj}[defn]{Conjecture}
\newtheorem{lem}[defn]{Lemma}
\newtheorem{thm}[defn]{Theorem}
\newtheorem{cor}[defn]{Corollary}
\newtheorem{claim}[defn]{Claim}
\theoremstyle{remark}
\newtheorem{rem}[defn]{Remark}
\newtheorem{ack}[defn]{\bf Acknoledgement}
\newcommand{\bQ}{\mathbb{Q}}
\newcommand{\bC}{\mathbb{C}}
\newcommand{\cB}{\mathcal{B}}
\newcommand{\paren}[1]{\left( #1 \right)}
\newcommand{\set}[1]{\left\{ #1 \right\}}
\newcommand{\av}[1]{\left| #1 \right|}
\newcommand{\aw}{\operatorname{aw}}
\newcommand{\lcm}{\operatorname{lcm}}
\begin{document}
\title{On Chern number inequality in dimension 3}
\author{Jheng-Jie Chen}
\address{\rm Department of Mathematics, National Central University, Chungli, 320,
Taiwan}
\email{d94221006@gmail.com}

%\date{\today}
\maketitle

\begin{abstract}
We prove that if $X\dasharrow X^+$ is a threefold terminal flip, then $c_1(X).c_2(X)\leq c_1(X^+).c_2(X^+)$
where $c_1(X)$ and $c_2(X)$ denote the Chern classes.
This gives the affirmative answer to a Question by Xie \cite{Xie2}.
We obtain the similar but weaker result in the case of divisorial contraction to curves. % $X\supset C\to Y\ni Q$ is a divisorial irreducible extremal neighborhood.
\end{abstract}

%%-------------------------------------------------------------------- sec.1 --

%Title:

%Pseudo-Effecitivities of second Chern classes via a filp and a divisorial contraction of type I

%Abstract:

%Let $X$ be a projective terminal threefold with anti-canonical divisor $-K_X$ nef. We would like to ask if the second Chern class $c_2(X)$ is pseudo-effective.
%This is true when numerical dimension $\nu (-K_X)$ is not $2$ due to several works by Professors Koll\'ar, Miyaoka, Mori, Takagi, Keel, Matsuki, Mckernan.
%Then, Professor Xie gives some partial results and ideas for the remaining case.
%In this talk, we will discuss the pseudo-effecitivities $c_2(X)$ and $c_2(X^+)$ (resp. $c_2(Y)$) via a flip $X\dasharrow X^+$ (resp. a divisorial contraction $X \to Y$ which contracts a divisor to a curve).
%This is a joint work with Professor Jungkai Chen.

%Proposal:

%To compare the singularities of higher index via a three-dimensional flip, one resolution might use the factorization diagram appeared in Chen-Hacon, and use the classifications of three dimensional divisorial contractions to higher index points due to Kawakita.

\section{Introduction}
%%-----------------------------------------------------------------------------
%\begin{ques}
%In the 3-dimensional Sarkisov program, can we have a geometric quantity instead of taking common resolution of threefold flips (crepant resolution) to give an estimate?
%\end{ques}

The main goals of birational geometry are to classify algebraic varieties up to birational equivalence and to find a good model inside a birational equivalent class. 
%These amounts to birational classification theory which aims to classify varieties, minimal model program which aims to find a good representative in a birational class, and birational theory which aims to study birational maps between different models.
%The lower dimensional studies are classical, while the higher dimensional geometry are still very challenging.
Base on the work of Reid, Mori, Koll\'ar, Kawamata, Shokurov, and others, minimal model conjecture in dimension three in characteristic zero was proved by Mori. That is, starting from a mildly singular threefold $X$, there exists a sequence of elementary birational maps (divisorial contractions and flips) such that the end product is either a minimal model or a Mori fiber space.

It is thus natural to expect that further detailed and explicit studies of three dimensional birational maps in minimal model program will be %very 
useful in the studies of three dimensional geometry in general. The purpose of this article is along this line. Since divisorial contractions to points are intensively studied and classified by Kawamata, Kawakita, Hayakawa and Jungkai Chen, we aim to study the divisorial contraction to curves and flips. More precisely, we compare various invariants of singularties.
%As an application, we are able to prove some result on Chern class of $\mathbb{Q}$-Fano threefolds.

%A divisor $D$ on a normal projective variety $X$ is called strictly nef if $D\cdot C>0$ for every irreducible curve $C$ in $X$.
%We say that one cycle $l$ in $X$ is pseudo-effective if $l\cdot H\geq 0$ for every ample divisor $H$.
This article was motivated by the studies of pseudo-effectiveness of second chern class $c_2(X)$ for terminal threefolds. 
\begin{conj}\label{c2pseff}
Let $X$ be a terminal projective threefold whose anti-canonical divisor $-K_X$ is strictly nef. Then the second Chern class $c_2(X)$ is pseudo-effective.
\end{conj}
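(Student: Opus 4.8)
The plan is to first recast the conjecture as a collection of intersection inequalities and then to propagate them along the minimal model program, with the inequalities proved in this paper serving as the quantitative engine. The class $c_2(X)$ lies in $N_1(X)$, the space of $1$-cycles modulo numerical equivalence, which is dual to the N\'eron--Severi space. The closed cone of pseudo-effective $1$-cycles is the Mori cone $\overline{NE}(X)$, whose dual cone is exactly the nef cone; since both are closed convex cones, pseudo-effectiveness of $c_2(X)$ is equivalent to
\[
c_2(X)\cdot D\ge 0\qquad\text{for every nef divisor }D.
\]
By continuity it suffices to treat $D$ ample, so the conjecture becomes a statement about the single numerical function $D\mapsto c_2(X)\cdot D$ on the nef cone.

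Because $-K_X$ is strictly nef, $K_X$ is negative on every curve, and one may run a $K_X$-minimal model program $X=X_0\dashrightarrow X_1\dashrightarrow\cdots\dashrightarrow X_r=Y$ terminating in a Mori fiber space $Y\to S$, all of whose steps are divisorial contractions and flips. I would aim to establish pseudo-effectiveness of $c_2$ on the comparatively simple geometry of $Y$ and then transport it back to $X$ through the inverse operations. This is where the main theorem enters: across a flip $X_i\dashrightarrow X_{i+1}$ the two threefolds agree in codimension one, so $c_2(X_i)$ and $c_2(X_{i+1})$ differ only by a class supported on the flipping curves, and the monotonicity $c_1(X_i)\cdot c_2(X_i)\le c_1(X_{i+1})\cdot c_2(X_{i+1})$ pins down the sign of that difference against $-K$; the weaker companion inequality plays the analogous role for divisorial contractions to curves.

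For the base case I would distinguish according to $\dim S$. If $S$ is a point then $Y$ is a terminal Fano threefold of Picard number one; here Reid's singular Riemann--Roch formula expresses $-K_Y\cdot c_2(Y)$ through $\chi(\cO_Y)=1$ and the orbifold contributions of the terminal points, so that $c_2(Y)\cdot(-K_Y)$ is controlled once the basket of singularities is bounded. Positivity against an arbitrary ample $D$ would then be checked on a general surface section $T\in|mD|$, which is smooth because terminal singularities are isolated: the exact sequence $0\to T_T\to T_Y|_T\to N_{T/Y}\to 0$ computes $c_2(Y)\cdot D$ on $T$ as a combination of $c_2(T)$ and $(-K_T)\cdot(D|_T)$, reducing the question to surface geometry. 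If $\dim S=1$ or $2$, the del Pezzo fibration, respectively conic bundle, structure allows a fibrewise computation of $c_2(Y)\cdot D$ and a reduction to positivity on the general fiber and on the base.

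The hard part will be upgrading the control supplied by the present inequalities to the full family $c_2\cdot D\ge 0$. Those inequalities record only the change of the \emph{canonical} pairing $c_2\cdot(-K)$, whereas pseudo-effectiveness demands uniform positivity against \emph{every} nef class, and under a flip an arbitrary nef divisor is transformed in a genuinely non-trivial way; moreover the strict nefness of $-K$ is lost at the very first flip, so one leaves the almost-Fano world immediately and cannot iterate the hypothesis. I expect that closing this gap requires either a refinement of the flip and contraction analysis that tracks $c_2\cdot D$ for all nef $D$ at once, or an independent verification of pseudo-effectiveness on the Mori fiber space together with a direct proof that this property is preserved under the inverse flips and divisorial extractions rebuilding $X$ from $Y$.
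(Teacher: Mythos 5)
The statement you are trying to prove is stated in the paper as a \emph{conjecture}, and the paper does not prove it; it only establishes two partial results feeding into it, namely the monotonicity $c_1(X)\cdot c_2(X)\le c_1(X^+)\cdot c_2(X^+)$ for threefold terminal flips (Theorem \ref{flipEF}) and the analogous inequality for divisorial irreducible extremal neighborhoods (Theorem \ref{typeIEF}). These answer Questions \ref{Fflip} and \ref{FtypeI} and thereby contribute to Xie's reduction (Question \ref{effective}) of Conjecture \ref{c2pseff} in the remaining case $\nu(-K_X)=2$, $q(X)=0$, but they do not close it. Your proposal likewise does not close it, and you concede as much in your final paragraph; so there is no complete argument here to compare against.

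Beyond that, two concrete problems with the strategy itself. First, the direction of propagation is inverted relative to the only way the paper's inequalities can be used: the known route starts from $c_1(X_0)\cdot c_2(X_0)\ge 0$ (Keel--Matsuki--McKernan) on the \emph{initial} model and pushes the single number $c_1\cdot c_2$ \emph{forward} along the MMP using $F(X)\ge F(X^+)$, whereas you propose to establish pseudo-effectivity on the Mori fiber space and transport it \emph{back}. The inequalities proved in the paper control only the pairing of $c_2$ with $-K$, i.e.\ one linear functional on $N_1$; they say nothing about $c_2\cdot D$ for a general nef divisor $D$, and under a flip the identification of nef cones changes, so no sign information on $c_2\cdot D$ survives. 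Second, the hypothesis that $-K$ is strictly nef is not preserved by the steps of the MMP (as you note), so the "base case'' on the end product $Y$ cannot invoke the Fano-type geometry you describe, and that base case is in any event only sketched: the surface-section computation $c_2(Y)\cdot D=\frac{1}{m}\bigl(c_2(T)+c_1(T)\cdot D|_T\bigr)$ does not yield nonnegativity without further input, since neither summand has a definite sign in general. What you have written is a research plan that correctly identifies where the difficulty lies, not a proof.
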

Conjecture \ref{c2pseff} is true in the case of the numerical dimension $\nu(-K_X)\neq 2$ due to several works by Miyaoka, Koll\'ar, Mori, Takagi, Keel, Matsuki, McKernan  (cf. \cite{Miya,KMMT,KMM04}).
In the case of numerical dimension $\nu (-K_X)=2$, Conjecture \ref{c2pseff} is true when the irregularity $q(X)\neq 0$ by Xie in \cite{Xie2}.
Furthermore, when $q(X)=0$, Xie considered the following question where the inequality below leads the positive answer to Conjecture \ref{c2pseff}.
\begin{ques}\label{effective}
Let $X_0$ be a $\mathbb{Q}$-factorial projective terminal threefold with $-K_{X_0}$ nef, $\nu(-K_{X_0})=2$, and $q(X_0)=0$.
Suppose $X_0\dashrightarrow X_1\dashrightarrow \cdots\dashrightarrow X_s=Y$ is a composition of divisorial contractions or flips in the minimal model program.
%Let $\mathcal{A}:=\{\ Y\ | \mbox{ there is a composition }X_0\dashrightarrow X_1\dashrightarrow \cdots\dashrightarrow X_r=Y\}$, %, \mbox{ for some }X_0\in \mathcal{I}\}$.
%where the map $\dashrightarrow$ in the composition is either a divisorial contraction or a flip 
Do we have the inequality $c_1(Y).c_2(Y)\geq 0$?
\end{ques}
Notice that $c_1(X_0).c_2(X_0)\geq 0$ due to Keel, Matsuki and Mckernan \cite[Corollary 6.2]{KMM04}.  
Let $F(X)$ denote the rational number $\sum_i (r_i-1/r_i)$ which is the contribution of non-Gorenstein singularities from the Riemann-Roch formula. 
%Recall that for 
%Because $\chi (\mathcal{O}_{Y})=1$, the geometric quantity $c_1(Y).c_2(Y)$ can be controlled by

\begin{thm}{\rm(\cite[Kawamata]{KA3},\cite[Reid]{YPG})}\label{KaRe}
Let $X$ be a projective threefold with at worst canonical singularities.
Then $$\chi (\mathcal{O}_X)=\frac{1}{24}c_1(X).c_2(X)+\frac{1}{24}\sum_i (r_i-1/r_i),$$ where $r_i$ is the index for the virtue singularity $\frac{1}{r_i}(1,-1,b_i)$.
\end{thm}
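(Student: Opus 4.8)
The plan is to prove the formula by applying a singular Riemann--Roch theorem to the structure sheaf $\cO_X$ and localizing the resulting correction to the singular points. Concretely, I would pass to the smooth Deligne--Mumford stack $\mathcal{X}\to X$ carrying the natural orbifold structure of the terminal (resp.\ canonical) threefold $X$, and invoke the orbifold Riemann--Roch theorem of Kawasaki. This expresses $\chi(\cO_X)=\chi(\mathcal{X},\cO_{\mathcal{X}})$ as an integral over the inertia stack $I\mathcal{X}$, which splits into the untwisted sector $\mathcal{X}$ together with finitely many twisted sectors supported over the singular points. The untwisted sector contributes $\int_{\mathcal{X}}\operatorname{td}(T_{\mathcal{X}})=\frac{1}{24}c_1(X).c_2(X)$, exactly as in the Hirzebruch--Riemann--Roch formula for a smooth threefold, so the entire content of the theorem is the evaluation of the twisted sectors.

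To reduce to the model singularities $\frac{1}{r}(1,-1,b)$, I would use the Reid--Mori classification of terminal threefold singularities together with the theory of baskets: the contribution of an arbitrary canonical singularity to Riemann--Roch is governed by an associated basket of cyclic quotient terminal points of type $\frac{1}{r_i}(1,-1,b_i)$, the Gorenstein points having $r_i=1$ and hence contributing nothing since $r_i-1/r_i=0$. It then suffices to compute, for a single isolated point of type $\frac{1}{r}(1,-1,b)$ modelled on $\bC^3/\mu_r$, the total twisted contribution
\begin{equation*}
L(r,b)=\frac{1}{r}\sum_{j=1}^{r-1}\frac{1}{(1-\zeta^{j})(1-\zeta^{-j})(1-\zeta^{bj})},\qquad \zeta=e^{2\pi i/r},
\end{equation*}
coming from the nontrivial group elements $\zeta^{j}$, each acting on the tangent space with eigenvalues $\zeta^{j},\zeta^{-j},\zeta^{bj}$.

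The heart of the argument is the elementary evaluation $L(r,b)=\frac{1}{24}(r-1/r)$. Pairing the index $j$ with $r-j$ and using that for a root of unity $w$ one has $\frac{1}{1-w}+\frac{1}{1-\bar{w}}=1$, the dependence on $b$ collapses and one is left with
\begin{equation*}
L(r,b)=\frac{1}{2r}\sum_{j=1}^{r-1}\frac{1}{(1-\zeta^{j})(1-\zeta^{-j})}=\frac{1}{2r}\sum_{j=1}^{r-1}\frac{1}{4\sin^{2}(\pi j/r)}.
\end{equation*}
The classical identity $\sum_{j=1}^{r-1}\frac{1}{4\sin^{2}(\pi j/r)}=\frac{r^{2}-1}{12}$ then yields $L(r,b)=\frac{r^{2}-1}{24r}=\frac{1}{24}(r-1/r)$, notably independent of $b$. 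Summing over the basket points and adding the untwisted term produces the stated formula.

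Finally, I expect the main obstacle not to be the Dedekind-type sum above, which is routine once set up, but rather the two reduction steps feeding into it: first, justifying that the untwisted sector integral genuinely equals the geometric Chern number $\frac{1}{24}c_1(X).c_2(X)$ in the statement, i.e.\ matching the stacky and the intrinsic definitions of $c_2(X)$ on a singular threefold; and second, replacing an arbitrary canonical singularity---which need be neither isolated nor a quotient---by its Riemann--Roch basket of cyclic quotient terminal points. The latter is where the deformation-theoretic input of Reid's theory enters, and is the step I would treat with the most care.
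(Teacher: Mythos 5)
The paper offers no proof of Theorem \ref{KaRe}: it is quoted verbatim from Kawamata \cite{KA3} and Reid \cite{YPG}, so there is nothing internal to compare your argument against. Judged on its own, your proposal is a faithful reconstruction of the standard proof from those sources, and the computational heart is correct: for an isolated quotient point $\frac{1}{r}(1,-1,b)$ with $\gcd(b,r)=1$, pairing $j$ with $r-j$ and using $\frac{1}{1-w}+\frac{1}{1-\bar w}=1$ does kill the $b$-dependence, and the identity $\sum_{j=1}^{r-1}\frac{1}{4\sin^2(\pi j/r)}=\frac{r^2-1}{12}$ gives exactly $\frac{1}{24}\left(r-\frac{1}{r}\right)$. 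This is the same Lefschetz-type local evaluation that Reid performs in \cite{YPG}.

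The one place where your write-up, taken literally, would fail is the opening move. A terminal (let alone canonical) threefold singularity is in general a compound Du Val point or a cyclic quotient of one --- e.g.\ a hypersurface singularity $xy+z^2+t^k=0$ --- and such a point admits no smooth Deligne--Mumford stack structure, so there is no $\mathcal{X}\to X$ to which Kawasaki's orbifold Riemann--Roch applies at the outset. The logical order must be reversed: one first replaces each non-quotient singularity by its basket of cyclic quotient terminal points (via a local $\mathbb{Q}$-Gorenstein deformation, or, as Reid actually does, by axiomatizing the local Riemann--Roch contributions $c_Q(D)$ and showing they depend only on the basket), and only then does the orbifold/equivariant computation apply. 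You correctly name this reduction and flag it as the delicate step, but in your outline it appears as a secondary reduction feeding into an already-performed stacky computation, whereas it is the load-bearing part of the proof; everything after it is the routine sum you evaluate. The other point you flag --- matching the untwisted-sector integral with the intrinsic $c_1(X).c_2(X)$ --- is genuinely a matter of fixing the definition of $c_2$ on the singular space, but since terminal threefold singularities are isolated this is a convention rather than an obstruction.
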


Xie gave the following more general and interesting questions which are related to Question \ref{effective}. 
\begin{ques}\label{Fflip}
Let $X$ be a $\mathbb{Q}$-factorial terminal projective threefold. Suppose that $X\dashrightarrow X^+$ is a flip.
Can we have the inequality $F(X)\geq F(X^+)$?
\end{ques}
\begin{ques}\label{FtypeI}
Let $X$ be a $\mathbb{Q}$-factorial terminal projective threefold. Suppose that $f\colon X\to Y$ is a divisorial contraction that contracts a divisor to a curve.
Can we have the inequality $F(X)\geq F(Y)$?
\end{ques}

The inequality in Question \ref{Fflip} (resp. Question \ref{FtypeI}) is equivalent to $c_1(X).c_2(X)\leq c_1(X^+).c_2(X^+)$ (resp. $c_1(X).c_2(X)\leq c_1(Y).c_2(Y)$) since $\chi(\mathcal{O}_X)$ is birational invariant. 
It is known that singularities on $X^+$ (resp. $Y$) become better %than those on $X$ 
by negativity Lemma, %(cf. \cite[Lemma 3.38]{KM98}).
and both difficulty and depth (cf. \cite[Definition 2.9]{CH11}) decrease (cf. \cite[Proposition 2.1]{CJK13}). %via a flip $X\dasharrow X^+$ (resp. a divisorial contraction $X\to Y$ to a curve), so  and 
It is expected to establish the inequalities in Questions \ref{Fflip} and \ref{FtypeI} as well.

The aim of this article is to give the affirmative answer (cf. Theorem \ref{flipEF}) to Question \ref{Fflip}.
Also, we obtain the positive answer (cf. Theorem \ref{typeIEF}) to Question \ref{FtypeI} when $f\colon X\to Y$ is a divisorial irreducible extremal neighborhood (cf. Definition \ref{etnbd}).

We prove these basically by using the classification of extremal neighborhood of Koll\'ar-Mori as follows.
\begin{thm}[Theorem 2.2 in \cite{KM92}]\label{flipnormal}
Suppose $f\colon X\supset C\to Y\ni Q$ is an irreducible extremal neighborhood. Let $E_X\in |-K_X|$ be a general member and $E_Y:=f(E_X)\in |-K_Y|$. 
Then the surfaces $E_X$ and $E_Y$ have at worst Du Val singularities. More precisely, $E_X\to E_Y$ is a partial resolution and every $f,E_X,E_Y$ are classified in Table \ref{ta:table2}.   
\end{thm}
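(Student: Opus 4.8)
The plan is to transport the three-dimensional statement into the geometry of Du Val surfaces, using the general anticanonical member (the ``general elephant'') as the bridge, and then to finish by a combinatorial classification of ADE dual graphs. Start with a general $E_X\in|-K_X|$ and set $E_Y:=f(E_X)$. Because $f$ is an extremal neighborhood, $-K_X$ is $f$-ample and $\rho(X/Y)=1$, and every global section of $\mathcal{O}_X(-K_X)$ descends to a section of the reflexive sheaf $\mathcal{O}_Y(-K_Y)$ via $f_*\mathcal{O}_X=\mathcal{O}_Y$. Since the elephant downstairs is singular at $Q$, a standard argument (taking the closure of $f^{-1}(E_Y\setminus Q)$) shows that the base locus of $|-K_X|$ contains $C=f^{-1}(Q)$, so the general member satisfies $C\subset E_X$. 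The two tasks are then: first, to prove that $E_X$ has at worst Du Val singularities; and second, to read off the finitely many possibilities for the pair $(C\subset E_X)$ together with its image $E_Y$.

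For the Du Val property I would argue by adjunction combined with the classification of terminal threefold singularities. As $E_X\sim -K_X$, adjunction gives $K_{E_X}=(K_X+E_X)|_{E_X}\sim 0$, so it suffices to show that $E_X$ has canonical (equivalently rational Gorenstein) surface singularities; a Gorenstein surface with $K_{E_X}\sim 0$ and canonical singularities is precisely a Du Val surface. Locally at each isolated singular point of $X$ the singularity is a cyclic quotient $\tfrac1r(1,-1,b)$ or one of the hypersurface types of Reid--Mori (cf. \cite{YPG}), and for each of these one checks directly that a general member of $|-K_X|$ through the point acquires only an $A$--$D$--$E$ singularity. Away from these finitely many points $X$ is smooth, and there I would use a Bertini-type argument together with Kawamata--Viehweg vanishing to control the base locus of $|-K_X|$ along $C$ and conclude that $E_X$ is smooth. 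Patching the local pictures yields that $E_X$ is Du Val in a whole neighborhood of $C$.

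Once $E_X$ is known to be Du Val, the induced morphism $E_X\to E_Y$ is birational and contracts precisely $C$ to $Q$; since $Y$ is canonical the image $E_Y$ is again Du Val, and $E_X\to E_Y$ is a partial resolution. The classification now becomes a finite surface computation. Passing to the minimal resolution $\widetilde{E}\to E_X\to E_Y$, the exceptional locus over $Q$ is an $A_n$, $D_n$, or $E_n$ configuration, and the strict transform of $C$ marks a distinguished vertex (or chain) in this dual graph. The conditions that $\rho(X/Y)=1$, that $K_X\cdot C<0$, and that contracting $C$ must return a terminal threefold impose numerical constraints (on the self-intersections and incidences of the marked curve) that cut the ADE graphs with a marked $\mathbb{P}^1$ down to exactly the explicit list recorded in Table~\ref{ta:table2}, following Koll\'ar--Mori \cite{KM92}.

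The main obstacle is the first step: establishing that the general elephant is Du Val \emph{along the entire curve} $C$, not merely at each isolated singular point of $X$ in isolation. This is the technical heart of the argument and genuinely requires the full classification of terminal threefold singularities together with vanishing theorems, in order to rule out worse singularities arising where $C$ passes through the non-Gorenstein points. By contrast, once the problem has been transported to Du Val surfaces, the enumeration producing Table~\ref{ta:table2} is a bookkeeping exercise on ADE dual graphs and presents no conceptual difficulty.
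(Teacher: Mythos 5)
The first thing to note is that the paper does not prove this statement at all: it is quoted verbatim as Theorem~2.2 of \cite{KM92}, and its proof (together with Mori's earlier work on the flip theorem) occupies the better part of that very long paper. So there is no internal argument to compare against; the only question is whether your sketch would establish the result independently, and it would not. The genuine gap sits exactly where you flag ``the technical heart'': you assert that the general member of $|-K_X|$ is Du Val along all of $C$ by checking each terminal singular point of $X$ in isolation and then ``patching'' via Bertini and Kawamata--Viehweg vanishing. That local-to-global step \emph{is} the general elephant conjecture for extremal neighborhoods, and it does not follow from Reid's pointwise statement plus vanishing: a priori the general member need not even be normal along $C$, and the singularity of $E_X$ at a point $P\in C$ is governed not by the abstract type of the threefold germ but by how $C$ sits inside it (Mori's invariants $i_P(1)$, $w_P$, the index-one covers and their splittings). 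Excluding worse-than-Du-Val behaviour requires the full classification of the germs $(P\in C\subset X)$ into types IA, IC, IIA, IIB, III and a global argument combining these local data with $K_X\cdot C<0$ and cohomology vanishing on infinitesimal thickenings of $C$; none of this is supplied by ``one checks directly'' or a Bertini-type argument on the smooth locus.

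The second half of your plan has the logic reversed as well: the list in Table~\ref{ta:table2} is not obtained by enumerating ADE dual graphs with a marked vertex subject to $\rho(X/Y)=1$ and $K_X\cdot C<0$. It records which combinations of local types of $(P\in C\subset X)$ can coexist on a single irreducible extremal neighborhood, and that enumeration is an output of the same three-dimensional local-global analysis, not a separate surface computation on the minimal resolution of $E_Y$. In particular the partial-resolution structure $E_X\to E_Y$ and the identification of $\Delta(E_X)$ and $\Delta(E_Y)$ in each row are consequences of the classification, not inputs to it. The correct course here is the one the paper takes: cite \cite{KM92} and treat the theorem as a black box.
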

When the extremal neighborhood $f$ is divisorial,
the specific element $E_Y$ yields all possible local general elephants of non-Gorenstein singularity $Q\in Y$ by Table \ref{ta:table1} and Lemma \ref{deform}.
This enables us to compare $F(X)$ and $F(Y)$. % (resp. $\Xi(X)$ and $\Xi(Y)$).
When $f$ is isolated (that is, a flipping contraction),  we obtain the similar computations by Table \ref{ta:table1}, Lemma \ref{deform} and Theorem \ref{DVelephant}.

In particular, there can't exist a non-Gorenstein singularity of type $cAx/4$, $cD/3$ or $cE/2$ on the contracted curve $\Gamma$ when $f$ is divisorial (resp. on the flipped curve $C^+$ when $f$ is isolated) (cf. Propositions \ref{flipcEcDcAx} and \ref{cEcDcAx}) by using the same idea and some lists given by Koll\'ar and Mori in \cite[Appendix, Theorem 13.17, Theorem 13.18]{KM92}.

\begin{ack}
The author was partially supported by %National Center for Theoretical Sciences and Ministry of Science and Technology 
NCTS and MOST of Taiwan.
He expresses his gratitude to Professor Jungkai Alfred Chen for extensively helpful and invaluable discussion. % and corrections.
The author is very grateful to Professor Koll\'ar, Professor Mori and Professor Prokhorov who kindly remind him the easier proof of Theorem \ref{DVelephant} and information for Proposition \ref{k2Asing}.
He would like to thank Professor Kawamata for useful discussion and comments.  %and warm encouragements.
\end{ack}

\section{\bf Preliminaries and notations}

In this section, we recall various notions derived from three dimensional terminal singularities and some basic properties.
We work over complex number field $\mathbb{C}$.

\iffalse
%%%%%%%%%%%%%%%%%%%%%%%%%%%%%%%%%%%%%%%%%%%%%%%%%%%%%%%%%%%%%%%%%%%%

\begin{defn}
Let $X$ be a normal projective variety. $X$ is called $\mathbb{Q}$-factorial if every $\mathbb{Q}$-divisor is $\mathbb{Q}$-Cartier.
We say that $X$ has at worst terminal singularities if it satisfies the following conditions:
\begin{enumerate}
\item[(1)] The Weil divisor $rK_X$ is Cartier for some positive integer $r$.
\item[(2)] For a resolution $f:U\to X$ with exceptional divisors $F_1,...,F_j$, then we have $K_U= f^*(K_X)+\sum_{j=1}^m a_j F_j$ with $a_j>0$ for all $j=1,...,m$.
\end{enumerate}
\end{defn}

%%%%%%%%%%%%%%%%%%%%%%%%%%%%%%%%%%%%%%%%%%%%%%%%%%%%%%%%%%%%%%%%%%%%
\fi

It is known that every terminal 3-fold singularity $P\in X$ is a quotient of isolated compound Du Val singularity by Reid in \cite{Reid83}.
The index of $P\in X$ is defined to be the smallest positive integer $r$ such that $rK_X$ is Cartier at $P$.
In \cite{Mori85}, Mori classified explicitly all such singularities of index $r\geq 2$ which are called non-Gorenstein singularities.
Then, for each non-Gorenstein singularity $P\in X$, the dual graph $\Delta(E)$ of general elephant $E\in | {-} K_X|$ in a neighborhood of $P$ is determined by the following table by Reid in \cite[Section 6]{YPG}.
Here $\textup{aw}$ denotes the axial weight and $ F(X)$ (resp. $\Xi(X)$) denotes the number $\sum_i (r_i-\frac{1}{r_i})$ (resp. $\sum_i r_i$) in Theorem \ref{KaRe}.%$x,y,z,t$ denote the local coordinates in the Theorem stated in \cite[Section 6]{YPG}.

%{%\tiny
%\footnotesize
%\small
%\begin{table}[H] \label{table1}
%\begin{center}
%\begin{tabular}{|>{$}c<{$} | >{$}c<{$} >{$}c<{$} >{$}c<{$} >{$}c<{$}  >{$}c<{$} >{$}c<{$} |}
%  \hline
%  \textup{type}   &\textup{type of action}&  \textup{aw} & \textup{covering} & \textup{basket}& \Xi(P\in X)&\Delta (E)\\ \hline
%  cA/r  &\frac{1}{r}(a,-a,1,0)& k& A_k \stackrel{r\colon 1}{\longrightarrow} A_{rk-1}&k \times (b,r) &rk& A_{rk-1}   \\
%  cAx/2  & \frac{1}{2}(0,1,1,1) & 2 &A_{2k-1} \stackrel{2\colon 1}{\longrightarrow} D_{k+2} &2 \times (1,2) & 4& D_{k+2}   \\
 %  cAx/4  & \frac{1}{4}(1,1,3,2) & k& A_{2k}\stackrel{4\colon 1}{\longrightarrow}D_{2k+1} & \{ (1,4), (k-1) \times (1,2)\} &2k+2&  D_{2k+1}   \\
%  cD/2 & \frac{1}{2}(1,0,1,1) & k&D_{k+1} \stackrel{2\colon 1}{\longrightarrow} D_{2k}&k \times (1,2) & 2k & D_{2k}   \\
% cD/3  & \frac{1}{3}(0,2,1,1)& 2&D_4 \stackrel{3\colon 1}{\longrightarrow} E_6 &2 \times (1,3) & 6 & E_6  \\
 % cE/2  & \frac{1}{2}(0,1,1,1)  & 3&E_6 \stackrel{2\colon 1}{\longrightarrow}  E_7&3 \times (1,2) & 6 & E_7  \\ \hline
%\end{tabular}
%\caption{} \label{ta:table1}
%\end{center}
%\end{table} \vspace{-0.5truecm}
%}

{
\begin{table}[H] \label{table1}
\begin{center}
\begin{tabular}{|>{$}c<{$} | >{$}c<{$} >{$}c<{$} >{$}c<{$} >{$}c<{$}  >{$}c<{$} >{$}c<{$} |}
  \hline
  \textup{type}   &\textup{type of action}&  \textup{aw} & \Delta (E) & \textup{basket}& \Xi(P\in X)& F(P\in X)\\ \hline
  cA/r  &\frac{1}{r}(a,-a,1,0)& k&  A_{rk-1} & k \times (b,r) & rk & rk-\frac{k}{r}   \\
  cAx/2  & \frac{1}{2}(0,1,1,1) & 2 &D_{k+2} &2 \times (1,2) & 4&  3  \\
   cAx/4  & \frac{1}{4}(1,1,3,2) & k& D_{2k+1} & \{ (1,4), (k-1) \times (1,2)\} &2k+2& \frac{6k+9}{4}    \\
  cD/2 & \frac{1}{2}(1,0,1,1) & k&D_{2k}&k \times (1,2) & 2k &  \frac{3k}{2}  \\
 cD/3  & \frac{1}{3}(0,2,1,1)& 2&E_6 &2 \times (1,3) & 6 &  \frac{16}{3} \\
  cE/2  & \frac{1}{2}(0,1,1,1)  & 3&E_7&3 \times (1,2) & 6 &  \frac{9}{2} \\ \hline
\end{tabular}
\caption{} \label{ta:table1}
\end{center}
\end{table} \vspace{-0.5truecm}
}

In this article, we fix $X$ to be a $\mathbb{Q}$-factorial projective threefold with at worst terminal singularities and fix $Y$ to be a normal varieity.
Suppose $X\dasharrow Z$ is a birational map where $Z$ is a normal variety.
Let $D$ be a prime divisor on $X$. We denote $D_Z$ the proper transform of $D$ on $Z$.

A birational morphism $f\colon X\to Y$ is called a divisorial contraction to a point $Q$ (resp. a curve $\Gamma$) if the exceptional set $\textup{Exc}(f)=F$ is an irreducible divisor on $X$, relative Picard number  $\rho(X/Y)=1$, $f_*(\mathcal{O}_X)=\mathcal{O}_Y$, and $-K_{X}$ is $f$-ample such that $f(F)$ is a point $Q$ (resp. a curve $\Gamma$).

A birational morphism $f\colon X\to Y$ is called a flipping contraction (resp. flopping contraction) if $\textup{Exc}(f)$ is a curve, $\rho(X/Y)=1$, $f_*(\mathcal{O}_X)=\mathcal{O}_Y$, and $-K_{X}$ is $f$-ample (resp. $f$-trivial).
In this case, the flip (resp. a flop) of $f$ is a birational morphism $f^+\colon X^+\to Y$ where $X^+$ is a $\mathbb{Q}$-factorial projective threefold such that $\textup{Exc}(f^+)$ is a curve, $\rho(X^+/Y)=1$, ${f^+}_*(\mathcal{O}_{X^+})=\mathcal{O}_Y$, and $K_{X^+}$ is $f$-ample (resp. $f^+$-trivial).
$f^+$ is called the flipped contraction (resp. a flopped contraction).
A curve $C$ in the exceptional set $\textup{Exc}(f)$ is called a flipping (resp. flopping) curve.
A curve $C^+$ in the exceptional set $\textup{Exc}(f)$ is called a flipped (resp. flopped) curve.
Note that $C$ (resp. $C^+$) might be reducible.

We recall some definitions in \cite{KM92, CH11}.

\begin{defn} \label{etnbd}
An irreducible extremal neighborhood is a proper bimeromorphic morphism $f\colon X\supset C\to Y\ni Q$ satisfying the following
\begin{enumerate}
\item $X$ is a 3-fold with at worst terminal singularities.
\item $Y$ is normal and $Q$ is the distinguished point.
\item $f^{-1}(Q)=C$ is isomorphic to $\mathbb{P}^1$
\item $K_X\cdot C<0$.

\end{enumerate}

\end{defn}

Let $\Delta(E_X)$ (resp. $\Delta(E_Y)$) denote the dual graph of the general elephant $E_X\in |{-}K_X|$ (resp. $E_Y\in |{-}K_Y|$).
In \cite[Theorem 2.2]{KM92},
Koll\'ar and Mori gave the following explicit list of irreducible extremal neighborhoods. %Here, $\mu_{C \subset X}$ denotes the maximum index of $X$ on $C$ 

{%\tiny
\small
%\footnotesize
\begin{table}[H] \label{table2}
\begin{center}
\begin{tabular}{|>{$}c<{$} | >{$}c<{$} >{$}c<{$} >{$}c<{$} >{$}c<{$} >{$}c<{$}|}
  \hline
  \textup{ref in KM} & \textup{type} & \mu_{C \subset X} & \Delta(E_X) & \Delta(E_Y) & \textup{remark} \\ \hline
  2.2.1.1 & cA/m+(III) & m & A_{mk-1} & A_{mk-1} & \\
  2.2.1.2 & cD/3+(III) & 3 & E_6 & E_6 & \\
  2.2.1.3 & IIA(cAx/4)+(III) & 4 & D_{2k+1} & D_{2k+1} & \\
  2.2.1'.1 & cAx/2+(III) & 2 & D_4 & D_4 & \\
  2.2.1'.2 & cD/2+(III) & 2 & D_{2k} & D_{2k} & \\
  2.2.1'.3 & cE/2+(III) & 2 & E_7 & E_7 &  \\
  2.2.1'.4 & IIA(cAx/4)+(III) & 4 & D_{2k+1} & D_{2k+1} & \\
  2.2.2  & IC(quot) & m & A_{m-1} & D_m & m \text{ is odd } \\
  2.2.2'  & IIB & 4 & D_5 & E_6 &    \\
  2.2.3 & IA+IA & m& A_{m-1}+D_{2k} & D_{2k+m} & m \textup{ is odd} \\
  2.2.3' & IA+IA+III & m&  A_{m-1}+A_1 & D_{m+2} & m \textup{ is odd} \\
  2.2.4 & ss IA+IA & \max\{r_1, r_2\} & A_{r_1 k_1 -1}+A_{r_2 k_2-1} &
    A_{r_1 k_1+r_2 k_2-1} & \\
  2.2.5 & \textup{Gorenstein} & 1 & \textup{smooth} &
    \textup{smooth} & \\
    \hline
\end{tabular}
\caption{} \label{ta:table2}
\end{center}
\end{table} \vspace{-0.5truecm}
}
Note that $\Delta (E_Y)$ is $A$-type only in cases 2.2.1.1 and 2.2.4 which are defined to be semistable extremal neighborhood.
The extremal neighborhood $X\supset C$ is called isolated if $f|_{X-C}\colon X-C\to Y-\{Q\}$ is an isomorphism (cf. Remark \ref{isolated}). Otherwise, it is called divisorial.

%Let $Q \in Y $ be a germ, which is non-singular on $Y{-}Q$. %Let $r$ be the Cartier index of $Q \in Y$. 
%Let $f \colon X \to Y \ni Q$ be an extremal contraction.
 %and $P \in Y$ be a higher index point of index $r_Y$ in the image of exceptional set. 
%Let $C \subset f^{-1}(P)$ be an irreducible curve. 
If $f$ is divisorial, 
we define
$$ \left\{ \begin{array}{ll}
r_X=r_{ C \subset X} &=  \text{lcm} \{\textup{ index }r(P) | P \in C \}; \\
\mu_X=\mu_{C \subset X} &=\max\{\textup{ index } r(P) | P \in C\}; 
% & =   \text{lcm}\{r(P) | P \in f^{-1}(Q) \}; \\
% &=\max\{ r(P) | P \in f^{-1}(Q)\}.
\end{array} \right.$$
 
% X \to P \in Y$ be an extremal neighborhood. Let $r_X$ be the least common multiple of indices of points on $C$ and $\mu_X$ be the maximum of indices of points on $C$.

Similarly, if $f \colon X \to Y$ is a flipping contraction and $f^+ \colon X^+ \to Y$ is the flip,
we define $\mu_{X^+}$ (resp. $r_{X^+}$) to be the maximum (resp. the least common multiple) of indices of singularities on flipped curves $C^+$.

\begin{defn} \label{wmor}
Suppose $P\in X$ is a terminal 3-fold singularity with index $r>1$.
We say that $g\colon W\supset G\to X\ni P$ is a $w$-morphism if it is a divisorial contraction that contracts the divisor $G$ to the point $P$ with minimal discrepancy $a(G,X)=1/r$.
\end{defn}

\subsection{Cartier index} 

\noindent

In this subsection, we collect some known results.

%\begin{lem} \label{division}
%Let $f \colon X \to Y$ be a divisorial contraction to a curve $\Gamma$. 
%We have 
%Let $Q \in Y$ be a singular point in the image of the exceptional divisor. Let $C \subset X$ be an extremal neighborhood over $Q \in Y$. 
%\end{lem}

\begin{lem} \label{comp_d}
Let $f \colon X \to Y$ be a divisorial contraction that contracts the divisor $F$ to a curve $\Gamma$. If $Q\in \Gamma$ has index $r$, we have $r \mid r_X$ and $2 r \leq \mu_X$.
\end{lem}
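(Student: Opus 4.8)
The plan is to make both assertions local at $Q$ and then combine the ramification formula for $f$ with the Koll\'ar--Mori classification of Theorem \ref{flipnormal}. Since the statement is \'etale-local at $Q$, I would first replace $Y$ by a small neighbourhood of $Q$ and $X$ by its preimage, so that $C:=f^{-1}(Q)$ is the whole exceptional fibre over $Q$ and $F$ is the unique $f$-exceptional divisor; as $Y$ is terminal its singularities are isolated, so $Q$ is an isolated singular point of $Y$ lying on $\Gamma$. Writing the ramification formula $K_X=f^{*}K_Y+aF$ with discrepancy $a=a(F,Y)>0$ and intersecting with a curve in $C$ gives $K_X\cdot C=a\,(F\cdot C)$, whence $F\cdot C<0$ because $-K_X$ is $f$-ample. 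This local picture is exactly an irreducible extremal neighbourhood in the sense of Definition \ref{etnbd}, so every row of Table \ref{ta:table2} is available.

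For the divisibility $r\mid r_X$, the point is that $r_X$ is the least positive integer $m$ with $mK_X$ Cartier in a neighbourhood of $C$, while $r$ is the order of $K_Y$ in the local class group at $Q$. I would pass to the index-one cover $\pi\colon\widetilde Y\to Y$ of $Q$, a cyclic cover of degree $r$ that is \'etale in codimension one and has $\widetilde Y$ Gorenstein, and base-change $f$ to the normalized fibre product $\widetilde f\colon\widetilde X\to\widetilde Y$. The induced cover $q\colon\widetilde X\to X$ is again cyclic of degree $r$ and \'etale in codimension one, so $q^{*}K_X=K_{\widetilde X}$; since $K_{\widetilde X}$ is Cartier over $\widetilde Q$, the character by which $\mathbb{Z}/r$ acts on the canonical class is trivialized along $q^{-1}(C)$, and tracking this through the points of $C$ forces $r\mid r_X$. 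As a cross-check, $r$ and $r_X$ can also be read off each divisorial row of Table \ref{ta:table2} together with the index data of Table \ref{ta:table1}.

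The substantive inequality is $2r\le\mu_X$, and this is where the difficulty lies. Conceptually I would argue it through the same index-one cover: because $r\ge 2$, the point $\widetilde Q\in\widetilde Y$ is a genuine (non-smooth) compound Du Val point, and a divisorial contraction $\widetilde f$ to the curve $\widetilde\Gamma\ni\widetilde Q$ through such a point cannot be realized with $\widetilde X$ Gorenstein along the entire fibre; hence some $\mathbb{Z}/r$-fixed point $\widetilde P$ of $\widetilde C$ has index $\ge 2$, and its quotient is a point of $C$ of index $\ge 2r$. The clean way to make the middle step rigorous, and the route I expect the paper to take, is to invoke Theorem \ref{flipnormal} directly: restricting to the divisorial rows of Table \ref{ta:table2} (those with $\Delta(E_X)\ne\Delta(E_Y)$, so that $E_X\to E_Y$ genuinely contracts a curve onto $Q$), one determines the type of $Q$ and then verifies $2r\le\mu_{C\subset X}$. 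The main obstacle is precisely pinning down the index $r$ of $Q$, together with ensuring the factor $r$ is genuinely multiplied into the fibre index: the Du Val graph $\Delta(E_Y)$ alone does not fix $r$ (for instance a $D_{\mathrm{odd}}$ graph arises from both a $cAx/2$ and a $cAx/4$ point, of indices $2$ and $4$), so one must first use the finer structure in the classification to determine $r$, and confirm that the maximal-index point of $C$ indeed sits over the fixed locus of the cyclic cover, before the inequality can be read off.
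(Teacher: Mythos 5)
Your proposal does not establish the key inequality $2r\le\mu_X$: both routes you sketch for it have genuine gaps, and neither is the mechanism the paper uses. The index-one-cover argument rests on the unproved claim that the induced contraction $\widetilde f\colon\widetilde X\to\widetilde Y$ cannot be Gorenstein along the whole fibre, and on the further claim that a $\mathbb{Z}/r$-fixed point of index $\ge 2$ upstairs descends to a point of index $\ge 2r$ downstairs; the latter requires knowing that the two cyclic structures are independent in the local class group, which is exactly what needs proof. Your fallback via Table \ref{ta:table2} fails for the reason you yourself identify (the dual graph $\Delta(E_Y)$ does not determine $r$), and in addition the lemma is stated for an arbitrary divisorial contraction to a curve, not only for an irreducible extremal neighbourhood, so the table does not directly apply. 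The divisibility $r\mid r_X$ is likewise left at the level of ``tracking the character through the points of $C$.''

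The paper's proof is a short discrepancy computation that you do not attempt. Resolve $X$ by weighted blowups over the singular points of $f^{-1}(\Gamma)$, writing $K_W=g^*K_X+\sum\frac{a_i}{r_i}F_i$ and $g^*F=F_W+\sum\frac{\alpha_i}{r_i}F_i$ with all $a_i,\alpha_i$ positive integers (the latter because the singular points lie on $F$). Then $K_W=g^*f^*K_Y+F_W+\sum\frac{a_i+\alpha_i}{r_i}F_i$, so every $g$-exceptional divisor has discrepancy $\frac{a_i+\alpha_i}{r_i}\ge\frac{2}{r_i}$ over $Y$. By Hayakawa there is a divisor over $Q$ of discrepancy exactly $\frac{1}{r}<1$, and a divisor of discrepancy $<1$ over a terminal point must appear on any resolution; it cannot be $F_W$, so $\frac{1}{r}=\frac{a_i+\alpha_i}{r_i}$ for some $i$. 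This single identity yields both conclusions at once: $r_i=r(a_i+\alpha_i)\ge 2r$ gives $2r\le r_i\le\mu_X$, and $r\mid r_i\mid r_X$. The point you missed is that the lemma is not a classification statement but a consequence of the fact that each exceptional divisor over $Q$ extracted through $X$ picks up two positive contributions in $\frac{1}{r_i}\mathbb{Z}$, one from the discrepancy over $X$ and one from the multiplicity of $F$.
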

\begin{proof}
Let $g \colon W \to X $ be a resolution of $X$ obtained by successive weighted blowups over singular points on $f^{-1}(\Gamma)$. Then we may write
\[
  K_W = g^*K_X + \sum_{i=1}^s \frac{a_{i}}{r_i} F_i \quad \textrm{and} \quad
  g^*F = F_W + \sum_{i=1}^s \frac{\alpha_i}{r_i} F_i,
\]
where all the integer $a_i > 0$ and $\alpha_i > 0$. Therefore,
\[
  K_W = g^*f^*K_Y + F_W + \sum_{i=1}^s \frac{a_i+\alpha_i}{r_i} F_i.
\]

Now $f \circ g : W \to Y$ is a resolution of $Y$ as well. There must exist an exceptional divisor over $Y$ with discrepancy $\frac{1}{r}$ by \cite{Haya99, Haya00}.
Hence for some $i$, we have
\[
  \frac{1}{r_Y}
  = \frac{a_i+\alpha_i}{r_i}
  \geq \frac{2}{r_i}
  \geq \frac{2}{\mu_X}. \qedhere
\]
\end{proof}

\begin{lem} \label{comp_f}
Let $f \colon X \to Y$ be a flipping contraction and $f^+ \colon X^+ \to Y$ be the flip. Then $\mu_{X^+} \leq \mu_X$.
\end{lem}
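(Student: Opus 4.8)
The plan is to argue pointwise on the flipped curve: it suffices to show that every singular point $P^+\in C^+$ has index $r(P^+)\le\mu_X$, since then $\mu_{X^+}=\max_{P^+\in C^+} r(P^+)\le\mu_X$. So I fix such a point, set $r^+:=r(P^+)$ and $Q:=f^+(P^+)\in Y$, and I extract a convenient valuation over it. Because $P^+\in X^+$ is a terminal threefold point of index $r^+$, the associated $w$-morphism (Definition \ref{wmor}), or equivalently \cite{Haya99,Haya00}, provides a divisor $E$ over $X^+$ with $\mathrm{center}_{X^+}(E)=P^+$ and discrepancy $a(E,X^+)=1/r^+$; moreover this is the minimal discrepancy, i.e. $1/r^+$ is smallest among all divisors centered at $P^+$.

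Next I would transport $E$ back to $X$. Since the flip $X\dashrightarrow X^+$ is an isomorphism over $Y\setminus\{Q\}$, the center of $E$ on $Y$ is $Q$, so $\mathrm{center}_X(E)\subseteq f^{-1}(Q)\subseteq C$. The decisive point is that the flip does not decrease discrepancies: taking a common resolution $h\colon W\to Y$ through which both $\pi\colon W\to X$ and $\pi^+\colon W\to X^+$ factor, one writes $\pi^*K_X-(\pi^+)^*K_{X^+}=\Theta$ with $\Theta$ an $h$-exceptional $\mathbb{Q}$-divisor. For any $h$-contracted curve $\gamma$ one has
\[
 (-\Theta)\cdot\gamma = K_{X^+}\cdot(\pi^+)_*\gamma + \bigl(-K_X\bigr)\cdot\pi_*\gamma \ \ge\ 0,
\]
because $K_{X^+}$ is $f^+$-ample and $-K_X$ is $f$-ample; hence $-\Theta$ is $h$-nef and the negativity lemma (this is the improvement of singularities already invoked in the introduction) gives $\Theta\ge 0$, that is $a(E,X)\le a(E,X^+)=1/r^+$.

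Finally I would bound $a(E,X)$ from below using that $X$ is terminal with isolated singularities. If $\mathrm{center}_X(E)$ is a curve, its generic point is a smooth point of $X$, so $a(E,X)\ge 1>1/\mu_X$; if it is a point $P\in C$, then the minimal discrepancy of the terminal index-$r(P)$ point $P\in X$ is $1/r(P)$, whence $a(E,X)\ge 1/r(P)\ge 1/\mu_X$. In either case $1/\mu_X\le a(E,X)\le 1/r^+$, so $r^+\le\mu_X$, and taking the maximum over $P^+\in C^+$ yields $\mu_{X^+}\le\mu_X$.

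The main obstacle is supplying the two discrepancy inputs cleanly rather than any calculation: first, that a flip never lowers discrepancies (the negativity-lemma step above, so that carrying $E$ back to $X$ can only decrease its discrepancy), and second, that a three-dimensional terminal point of index $r$ has minimal discrepancy exactly $1/r$ (giving the lower bound $a(E,X)\ge 1/\mu_X$), which is a standard property of terminal threefold singularities. Granting these, the matching of centers through $Q$ is automatic and no appeal to the explicit classification in Table \ref{ta:table2} is required.
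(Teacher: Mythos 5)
Your argument is correct and is essentially the proof the paper gives: both extract a divisor of minimal discrepancy $1/\mu_{X^+}$ over the flipped curve, use the negativity lemma (discrepancies do not decrease under a flip) on a common resolution to compare it with its discrepancy over $X$, and bound the latter below by $1/\mu_X$ via terminality. Your version merely spells out the matching of centers through $Q$ and the curve-center case more explicitly than the paper does.
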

\begin{proof}
Let $W$ be a common resolution of $X$ and $X^+$ and let $g \colon W \to X$ and $g^+ \colon W \to X^+$ be corresponding morphisms.
Then we may write
\[
  K_W
  = g^*K_X + \sum_{i=1}^s \frac{a_{i}}{r_i} F_i
  = {g^+}^*K_{X^+} + \sum_{i=1}^s b_{i} F_i
\]
with $\frac{a_i}{r_i} \leq b_i$.

There must exist an exceptional divisor over $X^+$ with discrepancy $\frac{1}{\mu_{X^+}}$. Hence $\frac{1}{\mu_{X^+}} = b_i $ for some $i$ and it follows that
\[
  \frac{1}{\mu_{X^+}}
  = b_i
  \geq \frac{a_i}{r_i}
  \geq \frac{1}{r_i}
  \geq \frac{1}{\mu_X}. \qedhere
\]
\end{proof}
From Lemma \ref{comp_d} and \cite[Theorem 4.2]{KM92}, we have the following assertion.
\begin{cor} \label{exclude}
Let $f \colon X \to Y$ be a divisorial contraction to a curve $\Gamma$ with $\mu_X \leq 3$. Then $Y$ has only Gorenstein singularities. Similarly, if $ f \colon X \to Y$ is a flipping contraction with $\mu_X \leq 2$, then $X^+$ has only Gorenstein singularities.
\end{cor}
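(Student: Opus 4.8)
The plan is to prove the two halves of the statement separately, reducing each to a numerical bound on the index of the relevant singular points and then ruling out index $\ge 2$ (i.e.\ non-Gorenstein) points. Both halves rest on the index comparisons of Lemmas \ref{comp_d} and \ref{comp_f}, with the flipping case additionally requiring the external input \cite[Theorem 4.2]{KM92}.

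For the divisorial contraction $f \colon X \to Y$ to a curve $\Gamma$, I would argue by contradiction. Suppose some $Q \in \Gamma$ is a non-Gorenstein point of $Y$, so its index is $r \ge 2$. Lemma \ref{comp_d} then gives $2r \le \mu_X$, and combined with the hypothesis $\mu_X \le 3$ this forces $2r \le 3$, i.e.\ $r \le 3/2$, contradicting $r \ge 2$. Hence every point of $\Gamma$ has index $1$. Since $f$ is an isomorphism over $Y \setminus \Gamma$, the only points at which $Y$ could acquire a non-Gorenstein singularity lie on $\Gamma$, so $Y$ has only Gorenstein singularities. The factor $2$ appearing here is exactly the estimate $a_i + \alpha_i \ge 2$ extracted in the proof of Lemma \ref{comp_d}, and it is precisely what permits the weaker hypothesis $\mu_X \le 3$ in this case rather than $\mu_X \le 2$.

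For the flipping contraction $f \colon X \to Y$ with flip $f^+ \colon X^+ \to Y$, I would first invoke Lemma \ref{comp_f}, which yields $\mu_{X^+} \le \mu_X \le 2$. Thus every singular point on the flipped curve $C^+$ has index at most $2$. This does not yet conclude the proof: because Lemma \ref{comp_f} provides only an inequality, it still permits index-$2$, hence non-Gorenstein, points on $C^+$. The remaining gap is closed by \cite[Theorem 4.2]{KM92}, which under the hypothesis $\mu_X \le 2$ excludes index-$2$ singularities on $C^+$, forcing every point of $C^+$ to have index $1$ and hence $X^+$ to be Gorenstein.

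The main obstacle is the flipping case. The divisorial direction is immediate once Lemma \ref{comp_d} is available, since the factor $2$ upgrades $\mu_X \le 3$ directly to $r = 1$. In the flipping direction, by contrast, Lemma \ref{comp_f} gives only $\mu_{X^+} \le \mu_X$ with no extra factor, so $\mu_X \le 2$ leaves open the possibility of an index-$2$ point on $C^+$; eliminating this possibility genuinely requires the finer classification of extremal neighborhoods and their flips recorded in \cite[Theorem 4.2]{KM92}, rather than the elementary discrepancy bookkeeping that suffices for Lemmas \ref{comp_d} and \ref{comp_f}.
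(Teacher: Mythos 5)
Your proposal is correct and follows the paper's own (one-line) derivation: the divisorial half is exactly the application of Lemma \ref{comp_d} (a point $Q\in\Gamma$ of index $r\ge 2$ would force $\mu_X\ge 2r\ge 4$), and the flipping half is delegated to \cite[Theorem 4.2]{KM92}, which is precisely the reference the paper invokes for this corollary. The intermediate appeal to Lemma \ref{comp_f} is harmless but redundant, since, as you yourself observe, the bound $\mu_{X^+}\le\mu_X\le 2$ does not exclude index-$2$ points on $C^+$ and all the remaining content must come from the cited theorem.
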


\begin{lem} \label{deform}
Let $P \in X$ be a terminal singularity and let $D \in \av{-K_X}$ be an irreducible element. Suppose that $D$ is of type $E_n$ then the general elephant is of type $E_m$, $D_m$, or $A_m$ with $m \leq n$ (equality holds only when $E_m = E_n$). Similarly, if $D$ is of type $D_n$, then the general elephant is of type $D_m$, or $A_m$ with $m \leq n$ (equality holds only when $D_m = D_n$). Also, if $D$ is of type $A_n$, then the general elephant is of type $A_m$ with $m \leq n$.
\end{lem}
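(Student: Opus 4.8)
The plan is to exhibit the general elephant as a small deformation of $D$ and then feed this into the deformation theory of Du Val singularities. First I would localise the statement at $P$: working in a small analytic neighbourhood $U \ni P$, both $D$ and a general member $E \in \av{-K_X}$ are divisors in the restricted linear system $\av{-K_X}|_U$, and by Reid's general elephant theorem (\cite{YPG}) $E$ has at worst Du Val singularities there. Choosing a general pencil in $\av{-K_X}$ through $D$ produces a flat family $\pi \colon \mathcal{S} \to T$ over a small disc $T$, whose central fibre $\mathcal{S}_0 = D$ carries the prescribed $E_n$, $D_n$, or $A_n$ singularity at $P$ and whose general fibre $\mathcal{S}_t$ is a general elephant $E$. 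Thus $E$ is obtained from $D$ by deformation, with the general (less singular) fibre playing the role opposite to the special fibre $D$.

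Next I would invoke the classical adjacency theory of rational double points (Brieskorn, Tjurina; see also Arnold). In a flat deformation of a Du Val singularity the nearby fibres acquire only Du Val singularities whose types lie below the central type in the $ADE$ adjacency hierarchy, and the Milnor number $\mu$, which equals the subscript $n$ for each of $A_n$, $D_n$, $E_n$, is upper semicontinuous. Semicontinuity immediately gives $m \leq n$. The hierarchy then supplies the type restrictions: a deformation of $A_n$ produces only $A_m$ points; a deformation of $D_n$ produces only $D_m$ or $A_m$ points (never an $E$ point); and a deformation of $E_n$ produces $E_m$, $D_m$, or $A_m$ points. Reading these off for the three cases yields exactly the asserted lists.

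For the equality clauses I would use that a deformation with $\mu$ constant cannot strictly simplify the singularity: a fixed Milnor number $n$ is realised by each of $A_n$, $D_n$, $E_n$, but none of these is an honest deformation of another, since in the hierarchy each is adjacent only to types of strictly smaller $\mu$. Hence $m = n$ forces the general type to coincide with that of $D$, which is precisely the stated equality condition.

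The main obstacle is the bookkeeping around the family rather than the core input. One must ensure that the single Du Val point of $D$ at $P$ deforms to the single general-elephant singularity of $E$ near $P$: a priori the point could split into several Du Val points, each only individually adjacent-below, which would muddy the comparison. Here I would use that for a terminal $3$-fold point the general elephant is a single Du Val singularity (as recorded in Table \ref{ta:table1}), so no splitting obstructs the argument. One also checks that $D$, though possibly special or singular elsewhere, is Du Val precisely at $P$, so that the local adjacency statement applies; this is exactly the hypothesis that $D$ is of type $E_n$, $D_n$, or $A_n$.
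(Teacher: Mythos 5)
Your proposal is correct and follows essentially the same route as the paper: the paper's proof is a one-line appeal to the semicontinuity of the corank and the Milnor number under deformation (citing Greuel--Lossen--Shustin, Corollaries 2.49, 2.52, 2.54), which is exactly the adjacency/semicontinuity argument you spell out after realizing the general elephant as a nearby fibre of a pencil through $D$. Your additional care about the singularity not splitting and about the $\mu$-constant equality case is a reasonable expansion of what the paper leaves implicit.
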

\begin{proof}
This is the case since corank and milnor number are semicontinuous. See \cite[Corollary 2.49, 2.52, 2.54]{deformation} for details.
\end{proof}

\section{Flipping contraction}
%%-----------------------------------------------------------------------------

In this section, we prove the inequality $F(X) \geq F(X^+)$ for any threefold terminal flip $X \dasharrow X^+$ (cf. Theorem \ref{flipEF}).
Notice that the flipping curve $C\subset X$ can be assumed to be irreducible by \cite[Section 8]{Kaw88} or \cite[Theorem 2.3]{CH11}.
We follow the classification of extremal neighborhood of Koll\'{a}r-Mori in Table \ref{ta:table3}. %Notice that an extremal neighborhood basically consists of an irreducible component of $f^{-1}(P)$. In what follows, we always pick the components containing the highest index point.
\begin{rem}\label{isolated}

%Let $f\colon X\supset C\to Y\ni P$ be an irreducible extremal contraction.
By \cite[Theorem 2.2]{KM92}, the isolated extremal neighborhoods are classified by the following.

{\tiny
\begin{table}[H] \label{table3}
\begin{center}
\begin{tabular}{|>{$}c<{$} | >{$}c<{$} >{$}c<{$} >{$}c<{$} >{$}c<{$} >{$}c<{$}|}
  \hline
  \textup{ref in KM} & \textup{type} & \mu_{C \subset Y} & \Delta(E_Y) & \Delta(E_X) & \textup{remark} \\ \hline
  2.2.1.1 & cA/m & m & A_{mk-1} & A_{mk-1} & \\
  2.2.1.2 & cD/3 & 3 & E_6 & E_6 & \\
  2.2.1.3 & IIA(cAx/4) & 4 & D_{2k+1} & D_{2k+1} & \\
  2.2.2  & IC(quot) & m & A_{m-1} & D_m & m \text{ is odd } \\
  2.2.3 & IA+IA & m& A_{m-1}+D_{2k} & D_{2k+m} & m \textup{ is odd} \\
  2.2.4 & ss IA+IA & \max\set{r_1, r_2} & A_{k_1 r_1-1}+A_{k_2r_2-1} &
    A_{k_1r_1+k_2r_2-1} & \\ \hline
\end{tabular}
\caption{} \label{ta:table3}
\end{center}
\end{table} \vspace{-0.5truecm}
}
\end{rem}

We start with the useful result which can be viewed as an application of Theorem \ref{flipnormal}. 
\begin{thm}\label{DVelephant}
Suppose that $f \colon X \to Y$ is a flipping contraction and $f^+ \colon X^+ \to Y$ is the flipped contraction of $f$.
Let $E_{X}\in | {-}K_X|$ be a general element and $E_{X^+}\in |{-}K_{X^+}|$ be its proper transform.
Then $E_{X^+}$ is normal near the flipped curve and has at worst Du Val singularities. In particular, if $S$ is the minimal resolution of $E_X$, then $S$ dominates $E_{X^+}$.
\end{thm}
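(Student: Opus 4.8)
My plan is to avoid a case-by-case inspection of Table~\ref{ta:table2} and instead \emph{descend} the Du Val property of $E_Y$, which is already supplied by Theorem~\ref{flipnormal}, across the flip to $E_{X^+}$ by an adjunction argument. First I reduce to the case that $C$ is irreducible, so that $f\colon X\supset C\to Y\ni Q$ is an irreducible extremal neighborhood in the sense of Definition~\ref{etnbd} (indeed $K_X\cdot C<0$ since $-K_X$ is $f$-ample). Theorem~\ref{flipnormal} then gives that $E_X$ and $E_Y$ are Du Val and that $E_X\to E_Y$ is a partial resolution. Since the flip is an isomorphism in codimension one, away from $C^+$ we have $E_{X^+}\cong E_X$, which is Du Val by Theorem~\ref{flipnormal}; hence only the behaviour of $E_{X^+}$ along the flipped curve $C^+$ is at issue, and one notes $E_Y=f_*E_X=f^+_*E_{X^+}$.

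The core of the argument is a crepant descent through $Y$. Because $E_Y\in|{-}K_Y|$ and $E_{X^+}\in|{-}K_{X^+}|$, we have $K_Y+E_Y\sim 0$ and $K_{X^+}+E_{X^+}\sim 0$; in particular both log canonical divisors are honestly Cartier (linearly trivial), \emph{even though} $K_Y$ itself is not $\mathbb{Q}$-Cartier. Consequently $f^+$ is crepant for these pairs, namely
\[
  K_{X^+}+E_{X^+}={f^+}^*(K_Y+E_Y).
\]
Since $E_Y$ is Du Val it is normal with $K_{E_Y}\sim 0$ and klt (indeed canonical), so inversion of adjunction shows that $(Y,E_Y)$ is plt in a neighbourhood of $E_Y$. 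Crepancy preserves discrepancies, so $(X^+,E_{X^+})$ is plt as well; by the normality of boundaries of plt pairs, $E_{X^+}$ is normal near $C^+$, and by adjunction $K_{E_{X^+}}=(K_{X^+}+E_{X^+})|_{E_{X^+}}\sim 0$ is Cartier with klt singularities. A Gorenstein klt surface singularity is canonical, i.e.\ Du Val, which gives both asserted properties of $E_{X^+}$.

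For the final clause, I observe that $E_X\to E_Y$ and $E_{X^+}\to E_Y$ are now both birational morphisms between Du Val surfaces, hence both crepant (any birational morphism between surfaces with trivial canonical class is crepant). Let $S\to E_X$ be the minimal resolution; then $S\to E_Y$ is the unique crepant resolution of the Du Val surface $E_Y$. The minimal resolution of $E_{X^+}$, composed with $E_{X^+}\to E_Y$, is again a crepant resolution of $E_Y$, so by uniqueness it coincides with $S\to E_Y$. Therefore $S$ is also the minimal resolution of $E_{X^+}$ and in particular factors through it, so $S$ dominates $E_{X^+}$.

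The main obstacle is precisely the middle step: establishing that $E_{X^+}$ is normal and Du Val along $C^+$. Away from $C^+$ this is automatic, but over $C^+$ the flipped curve lies in the base locus of $|{-}K_{X^+}|$ (since $-K_{X^+}\cdot C^+<0$), so one cannot simply apply Reid's general elephant theorem to the terminal germs of $X^+$; the crepant descent through $(Y,E_Y)$ is what circumvents this difficulty. The one point demanding care is to justify inversion of adjunction even though $K_Y$ is not $\mathbb{Q}$-Cartier, which is legitimate here because the relevant log canonical divisor $K_Y+E_Y\sim 0$ is genuinely Cartier.
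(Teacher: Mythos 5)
Your argument is correct, and it rests on the same basic mechanism as the paper's proof: since $K_X+E_X$, $K_Y+E_Y$ and $K_{X^+}+E_{X^+}$ are all linearly trivial, the three pairs are crepant to one another, and the Du Val property of the elephant supplied by Theorem~\ref{flipnormal} can be fed into inversion of adjunction and transported across the flip. Where the two proofs genuinely differ is in how normality of $E_{X^+}$ along $C^+$ is extracted. The paper makes $(X,E_X)$ canonical, transfers canonicity to $(X^+,E_{X^+})$, checks $R_1$ along $C^+$ by blowing up a component $\Gamma\subset C^+$ (canonicity forces $\operatorname{mult}_\Gamma E_{X^+}=1$), and combines this with the $S_2$ condition coming from Cohen--Macaulayness of a $\mathbb{Q}$-Cartier Weil divisor on a terminal threefold (\cite[Corollary 5.25]{KM98}); the Du Val property is then inherited from the crepant morphism onto $E_Y$. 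You instead make $(Y,E_Y)$, hence $(X^+,E_{X^+})$, plt and quote the normality of plt boundaries (\cite[Proposition 5.51]{KM98}), then get the Du Val property intrinsically from ``Gorenstein klt $\Rightarrow$ canonical''; this is a little slicker and never requires looking at the geometry of $C^+$ directly. The one elision in your version is the different in the two adjunction steps: inversion of adjunction needs $(E_Y,\operatorname{Diff}_{E_Y}(0))$ klt rather than just $E_Y$ klt, and your conclusion needs $K_{E_{X^+}}$ itself, not $K_{E_{X^+}}+\operatorname{Diff}$, to be trivial Cartier. Both points are harmless here because terminal threefold singularities are isolated, so the different is a divisor on the surface supported in a finite set and therefore vanishes, but this deserves an explicit sentence. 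Your closing argument, via uniqueness of the crepant resolution of the Du Val surface $E_Y$, agrees with the paper's.
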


\begin{proof}
%Method 1:
%It is equivalent to showing that two conditions $R_1$ and
%$S_2$ hold for the surface $E_{X^+}$ near $C^+$.
%Since $Y^+$ has at worst terminal singularities, $Y^+$ is
%Cohen-Macaulay. In particular, $\mathcal{O}_{Y^+,p}$ is a
%Cohen-Macaulay local ring for every $p\in E_{Y^+}$.
By  \cite[Corollary 5.25]{KM98}, the surface $E_{X^+}$ is
Cohen-Macaulay since $X^+$ has at worst terminal singularities and $E_{X^+}$ is a $\mathbb{Q}$-Cartier Weil divisor on $X^+$. Hence $S_2$ is satisfied.
%It remains to show $E_{X^+}$ is $R_1$ near $C^+$.

%Let $E_Y:=f(E_X)$ be the proper transform in $Y$. Then
The surfaces $E_X$ and $E_Y$ are normal and have at worst Du Val singularities and the restriction morphism $E_X\to E_Y$ is crepant by Theorem \ref{flipnormal}.
By inverse of adjunction, the pair $(X,E_X)$ is canonical.
Since $K_X+E_X=\mathcal{O}_{X}$ is $f$-trivial, the pairs $(X^+,E_{X^+})$ and $(X,E_X)$ have the same singularities. Let $g\colon W\to X^+$ be the blowup along an irreducible component $\Gamma$ of the flipped curve $C^+$. % and let $E_W$ be the proper transform of $E_{X^+}$ in $W$.
Since $(X^+,E_{X^+})$ is canonical and $C^+$ is contained in $E_{X^+}$, we see that $K_{W}+E_W=g^*(K_{X^+}+E_{X^+})$. In particular, $E_{X^+}$ is $R_1$ near $C^+$, and hence the surface $E_{X^+}$ is normal.

%Let $E_Y:=f(E_X)$ be the proper transform in $Y$. Then the surfaces $E_X$ and $E_Y$ are normal and have at worst Du Val singularities and the restriction morphism $E_X\to E_Y$ is crepant by \cite[Theorem 2.2]{KM92}. By inverse of adjunction, the pair $(X,E_X)$ is canonical.
%In particular, $E_{X^+}$ is normal near the flipped curve by Proposition \ref{normal}. %By adjunction formula,
 %where $f^+|_{E_{X^+}} \colon E_{X^+}\to E_Y$ is the induced proper birational morphism, we obtain that $f^+|_{E_{X^+}}$ is crepant by Hodge index Theorem.
%Notice that 
Because $K_{E_{X^+}}=\mathcal{O}_{E_{X^+}}$ is $f^+|_{E_{X^+}}$-trivial, the restriction morphisms $E_X\to E_Y$ and $f^+|_{E_{X^+}}\colon E_{X^+}\to E_Y$ are both crepant. Hence $S$ is also the minimal resolution of $E_Y$.% and we are done.
\end{proof}

According to Theorem \ref{DVelephant}, we are able to exclude some non-Gorenstein singularity types on the flipped curve $C^+$.

\begin{prop}\label{flipcEcDcAx}
Let $f \colon X \to Y$ be an irreducible flipping contraction and let $f^+ \colon X^+ \to Y$ be the flip of $f$.
If $P'\in C^+\subset X^+$ is a non-Gorenstein singularity, then $P'$ can not be of type $cE/2$,  $cD/3$ nor $cAx/4$.
\end{prop}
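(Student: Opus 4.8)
The strategy is to classify $f$ by the Koll\'ar--Mori list of isolated extremal neighborhoods (Table \ref{ta:table3}) and, in each case, to control the singularity $P'$ through the Du Val surface $E_{X^+}$ furnished by Theorem \ref{DVelephant}. The key mechanism I would set up first is the following chain of reductions. Since $K_{X^+}$ is $f^+$-ample, one has $E_{X^+}\cdot C^+_i=-K_{X^+}\cdot C^+_i<0$ on every component $C^+_i$ of the flipped curve, whence $C^+\subseteq E_{X^+}$. By Theorem \ref{DVelephant} the surface $E_{X^+}$ is Du Val near $C^+$, the minimal resolution $S$ of $E_Y$ dominates it, and $f^+|_{E_{X^+}}\colon E_{X^+}\to E_Y$ is crepant and contracts $C^+$ to $Q$. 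Consequently $\Delta(E_{X^+})$ is a \emph{proper} subgraph of $\Delta(E_Y)$, obtained by deleting the curves lying over $C^+$, and the Du Val type of $E_{X^+}$ at $P'$ is the connected component of $\Delta(E_{X^+})$ through $P'$. Finally, applying Lemma \ref{deform} to the member $E_{X^+}$ at $P'$, the type of the actual general elephant of $P'\in X^+$---which by Table \ref{ta:table1} pins down the type of the singularity $P'$---is dominated by that component.

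Granting this, the types $cE/2$ and $cD/3$ are eliminated uniformly. Were $P'$ of type $cE/2$ (resp. $cD/3$), its general elephant would be $E_7$ (resp. $E_6$), so by Lemma \ref{deform} the component of $\Delta(E_{X^+})$ at $P'$ would have to be $E_n$ with $n\ge 7$ (resp. $n\ge 6$). Inspecting the dual graphs of the base elephants $\Delta(E_Y)$ appearing in Table \ref{ta:table3}, each is of type $A$, of type $D$, or equal to $E_6$; none contains an $E_7$, which rules out $cE/2$, and the sole occurrence of $E_6$ is the full graph in case $2.2.1.2$, whose proper subgraphs have no $E_6$ component, which rules out $cD/3$.

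The type $cAx/4$ is the genuine obstacle, since its general elephant $D_{2k+1}$ has the same Du Val type as that of a $cAx/2$ point; the surface $E_{X^+}$ alone cannot separate them, and one must exploit that $cAx/4$ has index $4$. First, in the semistable cases $2.2.1.1$ and $2.2.4$ the graph $\Delta(E_Y)$ is of type $A$, so every component of $\Delta(E_{X^+})$ is of type $A$ and Lemma \ref{deform} forces each non-Gorenstein $P'\in C^+$ to be $cA/r$; no $cAx/4$ arises. Next, the index estimate $\mu_{X^+}\le\mu_X$ of Lemma \ref{comp_f} disposes of every case with $\mu_X<4$: case $2.2.1.2$ (where $\mu_X=3$) and cases $2.2.2$, $2.2.3$ with $m<4$, since there $X^+$ carries no index-$4$ point. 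The residual cases are $2.2.1.3$ and cases $2.2.2$, $2.2.3$ with $m\ge 5$, where $\mu_X\ge 4$ and $\Delta(E_Y)$ is of type $D_{\mathrm{odd}}$; for these I would appeal to the explicit tabulation of the flipped configurations in \cite[Appendix, Theorem 13.17, Theorem 13.18]{KM92} and verify directly that the points appearing on $C^+$ are never of type $cAx/4$. I expect this last verification to be the delicate step: the distinction between $cAx/4$ and $cAx/2$ is invisible to the Du Val elephant and is settled only by the precise Koll\'ar--Mori description of the flip, in particular by the parity of the index inherited from the odd integer $m$ in cases $2.2.2$ and $2.2.3$.
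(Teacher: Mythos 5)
Your proposal is correct and follows essentially the same route as the paper: both exclude $cE/2$ and $cD/3$ by noting that $\Delta(E_{X^+})$ is a proper subgraph of $\Delta(E_Y)$ (which is at worst $E_6$) and invoking Lemma \ref{deform}, handle the semistable cases via the $A$-type argument of Remark \ref{cAflip}, and dispose of $cAx/4$ in the remaining cases by the index lists in \cite[Appendix, Theorems 13.17, 13.18]{KM92}. Your extra use of Lemma \ref{comp_f} to pre-filter the low-index cases is a harmless refinement, not a different argument.
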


\begin{proof}
If $P' \in X^+$ is of type $cE/2$ (resp. $cD/3$), then dual graph of general elephant of $P\in X^+$ is of type $E_7$ (resp. $E_6$) by Table \ref{ta:table1}.
As $C^+$ corresponds to one vertex of dual graph $\Delta (E_Y)$, the dual graph $\Delta (E_{X^+})$ is better than $\Delta (E_Y)$ which is at worst $E_6$ from the descriptions in Table \ref{ta:table2}. Hence $P'\not\in C^+$ by Lemma \ref{deform}.
In cases 2.2.1.2, 2.2.1.3, 2.2.2 and 2.2.3,
every non-Gorenstein singularity on the flipped curve $C^+$ is of index $2$ or $3$ by \cite[Theorem 13.17, Theorem 13.18]{KM92}.
So $C^+$ cannot contain singularities of type $cAx/4$ by Remark \ref{isolated} and Remark \ref{cAflip} below.
\end{proof}

\begin{rem}\label{cAflip}
Suppose $P\in C^+$ is a non-Gorenstein singularity of $X^+$.
In the semistable cases (That is, in cases 2.2.1.1 and 2.2.4), the dual graph $\Delta (E_Y)$ is $A$-type,
so is each connected component of $\Delta(E_{X^+})$. In particular, $P\in X^+$ is of type $cA/r$ by Lemma \ref{deform}.

Notice that there are at most two connected components of $\Delta(E_{X^+})$ in the semistable cases since $C^+$ corresponds one vertex of dual graph
$\Delta (E_Y)$.
Therefore, 
the normal surface $E_{X^+}$ contains at most two singularities near $C^+$ by contracting exceptional curves in the minimal resolution of $E_{X^+}$.
This implies that $X^+$ contains at most two non-Gorenstein singularities on $C^+$ by Lemma \ref{deform}.
\end{rem}

Moreover, in the case 2.2.4, the singularities on the flipped curve $C^+$ are classified by Mori. %as follows.
\begin{prop}[Mori]\label{k2Asing}
Suppose $X\supset C$ is in the case $2.2.4$ and $C$ is a flipping curve. Let the singularities on $C$ be of types $cA/r_1$ and $cA/r_2$ with axial weights $k_1$ and $k_2$ as in Table \ref{table2}.
Then the flipped curve $C^+$ contains exactly two singularities $cA/r_1'$ and $cA/r_2'$ with axial weights $k_1'$ and $k_2'$.
Furthermore, by rearranging the indices, we have $r_1\geq r_1', r_2\geq r_2', k_1\leq k_1',$ and $k_2\leq k_2'$. 
\end{prop}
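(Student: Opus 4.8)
The plan is to extract everything possible from the Du Val general elephant and then to invoke the explicit local model of a semistable neighborhood for the finer index and axial-weight data. Set $N = r_1 k_1 + r_2 k_2$. By Table \ref{ta:table1} the point $cA/r_i$ with axial weight $k_i$ has general elephant $A_{r_i k_i - 1}$, so $\Delta(E_X) = A_{r_1 k_1 - 1} + A_{r_2 k_2 - 1}$; the restriction $E_X \to E_Y$ is crepant and contracts $C$, and by the classification (Theorem \ref{flipnormal}, case $2.2.4$) the base elephant is the single chain $E_Y = A_{N-1}$. By Theorem \ref{DVelephant} the minimal resolution $S$ of $E_X$ is simultaneously the minimal resolution of $E_Y$ and of $E_{X^+}$, and both $E_X \to E_Y$ and $E_{X^+} \to E_Y$ are crepant. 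Hence the exceptional locus of $S \to E_Y$ is a chain $\Gamma_1 - \cdots - \Gamma_{N-1}$ of $(-2)$-curves, and any crepant partial resolution of $E_Y$ leaving a single exceptional curve is obtained by fixing a position $j$, keeping $\Gamma_j$, and contracting the two complementary subchains to $A_{j-1}$ and $A_{N-j-1}$ points. For $E_X$ this is the split $j = r_1 k_1$; applying the same description to $E_{X^+}$, with $C^+ = \Gamma_{j^+}$, yields $\Delta(E_{X^+}) = A_{j^+ - 1} + A_{N - j^+ - 1}$.

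The next step is to identify the singularities on $C^+$. By Remark \ref{cAflip}, $C^+$ meets at most two non-Gorenstein points of $X^+$, each necessarily of type $cA/r$. First I would argue that $C^+$ lies at an interior vertex $1 < j^+ < N-1$, so that both complementary subchains are nonempty and $E_{X^+}$ acquires exactly two Du Val points; this produces exactly two singularities $cA/r_1'$ and $cA/r_2'$ on $C^+$, with axial weights $k_1', k_2'$, whose elephants are $A_{r_1' k_1' - 1}$ and $A_{r_2' k_2' - 1}$ by Table \ref{ta:table1}. Comparing with the chain length then forces $r_1' k_1' = j^+$ and $r_2' k_2' = N - j^+$, and in particular $r_1' k_1' + r_2' k_2' = r_1 k_1 + r_2 k_2$.

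What is invisible at the level of the elephant, and what carries the real content, is the separation of each product $r_i' k_i'$ into index and axial weight, together with the monotone comparison with $(r_i, k_i)$: since $E_X$ and $E_{X^+}$ are partial resolutions of the \emph{same} surface $A_{N-1}$ but split it at different positions $j$ and $j^+$, nothing combinatorial on $S$ alone forces the pairwise inequalities. For the indices by themselves, Lemma \ref{comp_f} already gives $\max\{r_1', r_2'\} = \mu_{X^+} \leq \mu_X = \max\{r_1, r_2\}$. To upgrade this to $r_1 \geq r_1'$, $r_2 \geq r_2'$, $k_1 \leq k_1'$, $k_2 \leq k_2'$ after a suitable relabeling, the plan is to use the explicit local description of a semistable $cA/r_1 + cA/r_2$ neighborhood, where the flip is realized as a toric flip and the transition $(r_1, k_1; r_2, k_2) \mapsto (r_1', k_1'; r_2', k_2')$ is governed by a Euclidean/continued-fraction step; the required monotonicity, with indices decreasing and axial weights increasing, is precisely the signature of such a step. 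This final local computation, due to Mori, is the step I expect to be the main obstacle, as it is where the semistable structure, rather than the crepant elephant alone, is genuinely used, and it is what pins down $j^+$ and couples the two sides so that the inequalities hold coordinate by coordinate.
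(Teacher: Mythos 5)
Your setup (the elephant $\Delta(E_X)=A_{r_1k_1-1}+A_{r_2k_2-1}$ degenerating to $\Delta(E_Y)=A_{N-1}$, and $E_{X^+}$ as a crepant partial resolution of $E_Y$ extracting one vertex) is consistent with the paper's framework, and you correctly diagnose that this combinatorial picture cannot see the pairwise inequalities. But that diagnosis is where your argument stops: the entire content of the proposition --- that $C^+$ carries \emph{exactly two} singularities, and that after relabeling $r_1\geq r_1'$, $r_2\geq r_2'$, $k_1\leq k_1'$, $k_2\leq k_2'$ hold coordinate by coordinate --- is deferred to an unspecified ``Euclidean/continued-fraction step'' whose monotonicity you assert is ``precisely the signature of such a step.'' That is not a proof. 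The paper's proof at this point invokes precise results of Mori on semistable extremal neighborhoods \cite{Mori02}: Theorem 4.7 there gives the first assertion (exactly two $cA$ points on $C^+$) and identifies the new invariants as $r_1'=d(k-1)$, $r_2'=-d(k)$ with axial weights $\alpha_{k-1}+\rho_{k-1}e(k+1)$ and $\alpha_{k-2}+\rho_{k-2}e(k)$, where $d(n)$ satisfies the recursion $d(n+1)+d(n-1)=\delta\rho_n d(n)$; the index inequalities then follow from the strict decrease of $d(n)$ along parity classes (Lemma 3.3.1, Corollary 3.4, Lemma 3.5 of \cite{Mori02}), and the axial-weight inequalities from $e(k),e(k+1)>0$ (Corollary 3.8) together with $\alpha_{i+4j}=\alpha_i$, $\alpha_3\geq\alpha_1$, $\alpha_4\geq\alpha_2$. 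None of this machinery is reproduced or even named in your proposal, and there is no elementary substitute for it visible in your outline.

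Two smaller points. First, your claim that comparing chain lengths ``forces $r_1'k_1'=j^+$ and $r_2'k_2'=N-j^+$'' overreaches: the components of $\Delta(E_{X^+})$ only bound the general elephants of the two points from above via Lemma \ref{deform}, so you get $r_1'k_1'\leq j^+$ and $r_2'k_2'\leq N-j^+$, hence $\Xi(X^+)\leq\Xi(X)$ (which is all the paper uses in Case 6 of Theorem \ref{flipEF}), not equality. Second, your assertion that $C^+$ sits at an interior vertex, so that exactly two singularities appear, is itself unproved --- a priori the flipped curve could meet only one non-Gorenstein point; this is again part of what Mori's Theorem 4.7 delivers and cannot be read off from the dual graphs alone.
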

\begin{proof}
The first assertion follows from \cite[Theorem 4.7]{Mori02}. We adopt the notations in \cite{Mori02} to prove the inequalities. % in the second part.
Put $d(i)=m_i=r_i$ and $\alpha_i=k_i$ for $i=1,2$. From \cite[Definition 3.2]{Mori02}, Mori defined the sequence $d(n)\in \mathbb{Z}, n\in \mathbb{Z}$ by 
$$d(n+1)+d(n-1)=\delta \rho_n d(n).$$
%By \cite[Corollary 3.4]{Mori02}, we assume that $d(1)>d(3)$.
From \cite[Corollary 4.1, Definition 4.2, Theorem 4.7]{Mori02},
there exists a positive integer $k\geq 3$ with the indices $r_1'=m_1'=d(k-1)>0$ and $r_2'=m_2'=-d(k)>0$ and the corresponding axial weights $\alpha_{k-1}+\rho_{k-1}e(k+1)$ and $\alpha_{k-2}+\rho_{k-2}e(k)$ where each $\rho_{j},\alpha_{j}\in \mathbb{N}$ and $e(j)\in \mathbb{Z}$ are defined in \cite[Definition 3.2]{Mori02}.
From \cite[Lemma 3.3.1, Corollary 3.4, Lemma 3.5]{Mori02}, it follows that 
\begin{align*}
m_1'&=d(k-1)<d(k-3)<\cdots < d(1) \textup{ or } d(2)\textup{ and }\\
m_2'&=-d(k)=d(k-2)-\delta \rho_{k-1}d(k-1)\\&<d(k-2)<d(k-4)<\cdots< d(2) \textup{ or } d(1). \\
\end{align*} 
By exchanging $r_1'$ and $r_2'$ (resp. $k_1'$ and $k_2'$), we may assume that $m_1'\leq d(1)$ and $m_2'<d(2)$.
Now $e(k),e(k+1)>0$ if $k\geq 4$ by \cite[Corollary 3.8]{Mori02}.
So the above axial weights are greater or equal to $\alpha_{k-1}, \alpha_{k-2}$ respectively.
From \cite[Definition 3.2]{Mori02}, $\alpha_3\geq \alpha_1$ and $\alpha_4\geq \alpha_2$ and $\alpha_i=\alpha_{i+4j}$ for every positive integer $j$.
In particular, $\alpha_{k-1}\geq \alpha_{1}$ and $\alpha_{k-2}\geq \alpha_{2}$. 
\end{proof}

Now, we give the affirmative answer to Question \ref{Fflip}.
\begin{thm}\label{flipEF}
Let  $f\colon X \to Y$ be a flipping contraction and $f^+\colon X^+\to Y$ be the corresponding flipped contraction. %If the extremal neighborhood $X\supset C$ is not 2.2.1.3,
Then $\Xi (X)\geq \Xi(X^+)$ and $F(X)\geq F(X^+)$, where $\Xi(X)$ denotes the integer $\sum_i r_i$ in Theorem \ref{KaRe}.
\end{thm}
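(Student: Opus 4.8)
The plan is to localize the statement and then run through the six isolated cases of Table~\ref{ta:table3}. Because $f$ and $f^{+}$ are isomorphisms over $Y\setminus\{Q\}$, the non-Gorenstein points of $X$ and of $X^{+}$ lying off $C$ and $C^{+}$ are in canonical bijection and contribute identically to $\Xi$ and to $F$; hence it suffices to compare $\sum_{P\in C}\Xi(P\in X)$ with $\sum_{P'\in C^{+}}\Xi(P'\in X^{+})$, and similarly for $F$. The points on the flipping curve $C$, together with their baskets and hence their $\Xi$ and $F$, are read off from the type column of Table~\ref{ta:table3} through Table~\ref{ta:table1}. For the flipped side I would use Theorem~\ref{DVelephant}: the proper transform $E_{X^{+}}$ is a normal Du Val surface whose minimal resolution coincides with the minimal resolution $S$ of $E_{X}$, which by Theorem~\ref{DVelephant} is also the minimal resolution of $E_{Y}$; consequently $\Delta(E_{X^{+}})$ is $\Delta(E_{Y})$ with the single vertex corresponding to $C^{+}$ deleted, and Lemma~\ref{deform} bounds the type of each $P'\in C^{+}$ from above by its connected component of $\Delta(E_{X^{+}})$.

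The book-keeping I would set up rests on two facts from Table~\ref{ta:table1}. First, every non-Gorenstein type except $cAx/4$ has a basket supported in a single index $r$, so that $F=(1-\tfrac1{r^{2}})\,\Xi$; in particular any point on $C^{+}$, where $cAx/4$ is excluded, has $F\le\tfrac89\Xi$ since its index is $2$ or $3$. Second, $\Xi$ equals the number of vertices of the general elephant, augmented by $1$ for the $cA/r$ and $cAx/4$ types and by a non-positive amount for $cD/2$ and $cAx/2$. Combining the second fact with Lemma~\ref{deform} gives $\Xi(X^{+})\le(\#\text{vertices of }\Delta(E_{X^{+}}))+\#\{cA\text{ points on }C^{+}\}$; since $\Delta(E_{X^{+}})$ has exactly one vertex fewer than $\Delta(E_{Y})$, a matching count on the source reduces the inequality $\Xi(X^{+})\le\Xi(X)$ to an upper bound on the number of $cA$ points of $C^{+}$.

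In the semistable cases 2.2.1.1 and 2.2.4 the graph $\Delta(E_{Y})$ is of type $A$, so by Remark~\ref{cAflip} every $P'\in C^{+}$ is of type $cA/r$ and there are at most two of them; deleting one vertex of a chain yields at most two pieces, so the vertex count gives $\Xi(X^{+})\le\Xi(X)$. For $F$ in case 2.2.1.1 I would combine $\mu_{X^{+}}\le\mu_{X}$ (Lemma~\ref{comp_f}) with the identity $F=(1-\tfrac1{r^{2}})\Xi$, and in case 2.2.4 I would invoke Proposition~\ref{k2Asing}: there $\Xi(X)=\Xi(X^{+})$, and the matching $r_{i}\ge r_{i}'$, $k_{i}\le k_{i}'$ yields $F(X)-F(X^{+})=\sum_{i}\bigl(\tfrac{k_{i}'}{r_{i}'}-\tfrac{k_{i}}{r_{i}}\bigr)\ge0$. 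In the non-semistable cases 2.2.1.2 and 2.2.2 the source basket already has ratio $F/\Xi\ge\tfrac89$ (the $cD/3$ point, resp. the cyclic point of odd index $m\ge3$), so once $\Xi(X^{+})\le\Xi(X)$ is known the $F$-inequality follows formally from $F(X^{+})\le\tfrac89\Xi(X^{+})\le\tfrac89\Xi(X)=F(X)$.

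The main obstacle is concentrated in cases 2.2.1.3 and 2.2.3, whose source baskets mix index $4$ (resp. $m$) with index $2$ and therefore have $F/\Xi<\tfrac89$: for these the crude estimate $F(X^{+})\le\tfrac89\Xi(X^{+})$ overshoots $F(X)$, and I would have to show that index-$3$ points do not proliferate on $C^{+}$. Dually, the reduction of $\Xi(X^{+})\le\Xi(X)$ to ``at most one (resp. two) $cA$ points on $C^{+}$'' cannot be settled by graph combinatorics alone, since deleting a trivalent vertex of a $D$- or $E$-graph would split $\Delta(E_{X^{+}})$ into three $A$-type components and a priori produce too many $cA$ points. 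Excluding such vertex deletions and pinning down the exact indices and axial weights on $C^{+}$ is where I expect to rely on the explicit flips of \cite[Appendix, Theorem 13.17, Theorem 13.18]{KM92} together with Proposition~\ref{flipcEcDcAx}, rather than on the dual-graph bookkeeping by itself.
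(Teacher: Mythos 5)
Your overall architecture matches the paper's: reduce to the six isolated cases of Table \ref{ta:table3}, read off the source contribution from Table \ref{ta:table1}, and control the flipped side through Theorem \ref{DVelephant}, Lemma \ref{deform}, Proposition \ref{flipcEcDcAx}, Lemma \ref{comp_f}, the appendix of \cite{KM92}, and Proposition \ref{k2Asing} for the semistable case. Your ratio bookkeeping $F=(1-\tfrac{1}{r^2})\Xi$ even streamlines several cases relative to the paper's subcase-by-subcase arithmetic. But the proposal has a genuine gap exactly where you flag ``the main obstacle,'' namely case $2.2.1.3$ when the flipped curve carries an index-$3$ point $P_2'$ (necessarily of type $cA/3$ with some axial weight $k_2'$). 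The tools you propose to invoke there do not close it: the appendix lists of \cite[Theorem 13.17, Theorem 13.18]{KM92} and Proposition \ref{flipcEcDcAx} pin down the \emph{index and type} of $P_2'$ but say nothing about $k_2'$, and the dual-graph estimate only gives $2k_1'+3k_2'\leq 2k+2$. That is not enough for the $F$-inequality: each unit of $\Xi$ at a $cA/3$ point contributes $\tfrac{8}{9}$ to $F$ versus $\tfrac{3}{4}$ at the index-$2$ part of the $cAx/4$ basket, so taking $k_1'=1$ and $k_2'$ as large as the vertex count allows yields $F(X^+)\approx\tfrac{16k}{9}$ against $F(X)=\tfrac{3k}{2}+\tfrac{9}{4}$, which fails for large $k$.

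The paper closes this by proving the sharp bound $k_2'\leq 2$, and that Claim is the real content of the theorem's hardest case: it factors the flip through a $w$-morphism $g\colon W\to X$ via \cite[Theorem 3.3]{CH11}, identifies $g$ explicitly as a weighted blowup at the $cAx/4$ point using Hayakawa's classification (so that the index-$>2$ part of $W$'s basket is a single cyclic point of index at most $5$), invokes Kawakita's classification of the divisorial contraction $g'$ on the other side, and then propagates the auxiliary invariant $\Xi_{>2}$ monotonically through the intermediate flips to conclude $3k_2'=\Xi_{>2}(X^+)\leq 7$. None of this is recoverable from the dual-graph and appendix data alone, so you would need to supply an argument of this kind (or an equivalent bound on the axial weight of the $cA/3$ point) to complete the proof. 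A small secondary point: in case $2.2.4$ you assert $\Xi(X)=\Xi(X^+)$, whereas Proposition \ref{k2Asing} and the vertex count only give $\Xi(X^+)\leq\Xi(X)$; your $F$-estimate survives with the inequality, but the claim as stated is unjustified.
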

\begin{proof}
We first deal with the non-semistable extremal neighborhood. %since the singularities information on $X^+$ were studied in \cite[Chap 13, Appendix]{KM92}.
Let $E_{X^+}\in |-K_{X^+}|$ be a general elephant.

\noindent
{\bf Case 1.} (In case $2.2.1.2$) $cD/3$, $\Delta(E_X)=\Delta(E_Y)=E_{6}$.
There are at most one singularity $P'$ of index 2 on $C^+$ by \cite[Chap 13, Appendix]{KM92}.
Let $k'$ be the axial weight of $P'$. 
Since $E_{X^+}$ is a partial resolution of $E_Y$ and $\Delta(E_Y)=E_{6}$, by Lemma \ref{deform},
the general elephant of $P'\in X^+$ is $A_{n_1}$ or $D_{n_2}$ with $n_1\leq 5$ and $n_2\leq 5$.
%In particular, $P'\in X^+$ is a singularity of type $cA/2$, $cAx/2$ or $cD/2$.   \\
By Proposition \ref{flipcEcDcAx}, $P'\in X^+$ is of type $cA/2$, $cAx/2$ or $cD/2$.

\noindent
{\bf 1.1.} The index $2$ point $P'$ on $C^+$ is of type $cA/2$.
We have $2k'-1\leq 5$, so $\Xi(X^+)=2k'\leq 6=\Xi(X)$ and $F(X^+)= \frac{3k'}{2}\leq \frac{18}{4} < \frac{16}{3}=F(X)$.

\noindent
{\bf 1.2.} The index $2$ point $P'$ on $C^+$ is of type $cAx/2$. \\
We have $\Xi(X^+)=4< 6=\Xi(X)$ and $F(X^+)= 3<\frac{16}{3}=F(X)$.

\noindent
{\bf 1.3.} The index $2$ point $P'$ on $C^+$ is of type $cD/2$. \\
We have $2k'\leq 5$, so $\Xi(X^+)=2k'< 6=\Xi(X)$ and $F(X^+)= \frac{3k'}{2}\leq \frac{15}{4} <\frac{16}{3}=F(X)$.

\bigskip
\noindent
{\bf Case 2.} (cf. [2.2.1.3] of \cite{KM92}) $IIA$ ($cAx/4$), $\Delta(E_X) = \Delta(E_Y) = D_{2k+1}$.
There is one singularity $P_1'$ of index 2 and probably another singularity $P_2'$ of index $3$ on $C^+$ by \cite[Chap 13, Appendix]{KM92}.
Let $k_1'$ and $k_2'$ be the corresponding axial weights of $P_1'$ and $P_2'$, respectively.
By Proposition \ref{flipcEcDcAx}, $P_1'\in X^+$ is of type $cA/2$ or $cAx/2$ or $cD/2$ and $P_2'\in X^+$ is of type $cA/3$.
%Since $E_{X^+}$ is a partial resolution of $E_Y$, the general elephant of $P_1$ and $P_2$ can not be $E$-type.
%In particular, 

\noindent
{\bf Subase 2-1.} Suppose there is only one non-Gorenstein singularity $P_1'\in C^+$.

\noindent
{\bf 2-1.1.} The index $2$ point $P_1'$ on $C^+$ is of type $cA/2$.
We have $2k_1'-1\leq 2k$, so $\Xi(X^+)=2k_1'< 2k+2 =\Xi(X)$ and $F(X^+)= \frac{3k_1'}{2} < \frac{6k+9}{4}=F(X)$.

\noindent
{\bf 2-1.2.} The index $2$ point $P_1'$ on $C^+$ is of type $cAx/2$. \\
We have $\Xi(X^+)=4< 2k+2=\Xi(X)$ and $F(X^+)= 3< \frac{6k+9}{4}=F(X)$.

\noindent
{\bf 2-1.3.} The index $2$ point $P_1'$ on $C^+$ is of type $cD/2$. \\
We have $2k_1'\leq 2k$, so $\Xi(X^+)=2k_1'< 2k+2=\Xi(X)$ and $F(X^+)= \frac{3k_1'}{2}< \frac{6k+9}{4}=F(X)$.

\noindent
{\bf Subase 2-2.} Suppose there are two non-Gorenstein singularities $P_1',P_2'\in C^+$.
%This case is complicated a little bit.
%The following computations are incorrect! \underline{Need to be modified!!}
The index $3$ singularity $P_2'$ is $cA/3$ from Proposition \ref{flipcEcDcAx}.
By classification in \cite[Appendix A.2]{KM92}, the extremal neighborhood $X\supset C\ni P$ is in \cite[Appendix (A.2.2.1)]{KM92}.
That is, $$(X,P)=(y_1,y_2,y_3,y_4 ;\alpha)/\mathbb{Z}_4(1,1,3,2;2)\supset C=y_1\textup{-axis}/\mathbb{Z}_4, $$
$$\textup{and }\alpha=0\cdot y_4+y_3^2+g(y_1,y_2)y_2+\cdots\in (y_2,\ y_3,\ y_4)$$
where $g(y_1,y_2)$ is a nonzero linear form in $y_1,y_2$ with the condition
$$\alpha\equiv y_1y_2\textup{\ \ \ \ mod\ }(y_2,\ y_3,\ y_4)^2.$$
By a coordinate change in $y_1, y_2$, we may assume that $\alpha=y_1^2+y_3^2+f(y_2,y_4)$ where $y_2^2$ appears in $f(y_2, y_4)$.
If we put $\tau-wt(y_2)=1/4,$ and $\tau-wt(y_4)=2/4$, then $\tau-wt(f(y_2,y_4))=\tau-wt(y_2^2)=1/2$.

%Since the non-Gorenstein singularity $P$ on $C$ is of type $cAx/4$ on an extremal neighborhood $IIA$, by \cite[Appendix A]{M88},
%we have $wt(y_1,y_2,y_3,y_4)\equiv (\frac{1}{4},\frac{1}{4},\frac{3}{4},\frac{2}{4})$ and
%$$C^{\#}= \{y_1-y_2=y_3=y_4=0\}\subset X^{\#}=\{g_2\cdot (y_1-y_2)+g_3\cdot y_3+g_4\cdot y_4=0\},$$
%where $wt(g_2,g_3,g_4)\equiv (\frac{1}{4},\frac{3}{4},\frac{0}{4})$, $g_2,g_3\not\in (y_1,y_2,y_3,y_4)^2$, $g_4(0)=0$. As $X^{\#}$ has only isolated singularity and $g_2,g_3\not\in (y_1,y_2,y_3,y_4)^2$,
\begin{claim}
$k_2'\leq 2.$
\end{claim}
\begin{proof}
From \cite[Theorem 3.3]{CH11}, the flip $X\dasharrow X^+$ can be factored into the diagram.
\[
\xymatrix{
W\ar@{-->}[rr]^{h} \ar[d]_g & &W'\ar[d]^{g'} \\
X\ar@{-->}[rr]\ar[rd]_f&& X^+\ar[ld]^{f^+}\\
&Y}
\]
where $t$ is a positive integer, $g$ is a $w$-morphism, %(cf. Definition \cite[Definition 2.9]{CH11})
$g'$ is a divisorial contraction, and $h$ is a composition of flips and probably a flop.
By \cite[Theorem 7.4, Theorem 7.9]{Haya99}, the $w$-morphism $g\colon W\to X$ with center $P$ is actually the weighted blowup with weight
$$wt(y_1,y_2,y_3,y_4)=(\frac{1}{4},\frac{1}{4},\frac{3}{4},\frac{2}{4})\textup{ or }  (\frac{5}{4},\frac{1}{4},\frac{3}{4},\frac{2}{4}),$$
and the non-Gorenstein singularities on $W$ consist of a cyclic quotient point of index $\leq 5$ and at worst a point $cD/2$ with axial weight $k''$.
By Lemma \ref{comp_f}, we see that the maximum index of $W'$ $\leq 5$. Since $P_2'$ is a point of index $3$ in the flipped curve $C^+$, by Lemma \ref{comp_d}, the divisorial contraction $g'$ must contract a divisor to a point.
Denote by $G$ the exceptional divisor of $g$.
For our purpose, we may assume that the center of $g'$ has index $3$. So the center of $g'$ must be $P_2'$.
%Since the dual graph $\Delta(E_Y)$ is $D$-type, by Lemma \ref{deform}, of type $cA/3$.
From Kawakita's classification in \cite[Theorem 1.2]{Kawakita2}, $g'$ is a weighted blow up 
and there exists at most three non-Gorenstein singularities on the exceptional divisor $G_{W'}$ where at most one singularity is not cyclic quotient.
%with weight 
%$(\frac{{r_1}}{3},\frac{{r_2}}{3},\frac{{a}}{3},1)$ and $K_{W'}={g'}^*K_{X^+}+\frac{{a}}{3}F_{W'}$.
%There are at most two cyclic quotient singularities ${r_1^j}'$ and ${r_2^j}'$
%By changing of coordinates, we may assume ${r_1} \leq {r_2}$.
%There are two (resp. one) cyclic quotient singularities of indices ${r_1}$, ${r_2}$ (resp. ${r_2}$) on $F_{W'}$ if ${r_1}>1$ (resp. ${r_1}=1$).
%Also, there exists at most another non-Gorenstein singularity on $F_{W'}$.  
%Notice that for every $t=0,...,n-2$,  $P_{t+1}\in F^t_{X_{t+1}}\cap \Gamma_{t+1}$.
%$P_{t+1}$ cannot be a cyclic quotient singularity by Kawamata in \cite{Kaw96}.
%In particular, $P_{t+1}$ is of type $cA/r'$.

For each projective terminal threefold $Z$, we define $$\Xi_{>2}(Z):=\sum_{i, \textup{ where }r_i>2} k_ir_i.$$
Suppose $W_j\dasharrow W_{j+1}$ is a flip which factors through $h\colon W\dasharrow W'$. Let $W_j\supset C_j$ be an isolated irreducible extremal neighborhood.
If $W_j\supset C_j$ is in the case 2.2.1.1, by Lemma \ref{comp_f} and $\Xi(W_j)\geq \Xi(W_{j+1})$ in case 5 of the proof in this Theorem, we see $\Xi_{>2}(W_j)\geq \Xi_{>2}(W_{j+1})$.
If $W_j\supset C_j$ is in one of cases 2.2.2 and 2.2.3 in Table \ref{ta:table3}, there is no singularity of index $\geq 3$ on the flipped curve, so $\Xi_{>2}(W_j)\geq \Xi_{>2}(W_{j+1})$. Notice that $C_j$ contains no point of type $cAx/4$ and $cD/3$ (resp. $cD/2$) of $W_j$ by Proposition \ref{flipcEcDcAx} (resp. by Table \ref{ta:table3} and \cite[Remark 1]{Mori07}). In particular, $W$ is isomorphic to $W_j$ in an open neighborhood of the singularity $cD/2$.
When $W_j\supset C_j$ is in the case 2.2.4, by Case 6 of the proof in this Theorem, we see that $\Xi(W_{j})\geq \Xi(W_{j+1})$ and hence
$$5\geq \Xi_{>2}(W)=\Xi(W)-2k''\geq \Xi(W_{j})-2k''\geq \Xi(W_{j+1})-2k''\geq  \Xi_{>2}(W_{j+1}).$$   % has maximum index $\leq 5$, by using the factorization diagram in \cite[Theorem 3.3]{CH11} and induction on depth, we have ??
In all cases, we have %by Kawakita's classification of divisorial contraction to a $cA/r$ point,
$$3k_2'=\Xi_{>2}(X^+)\leq \Xi_{>2}(W')+2\leq \Xi_{>2}(W)+2\leq 5+2=7.$$
This implies $k_2'\leq 2$.

 %By \cite[Theorem 7.4, Theorem 7.9]{Haya99}, there exists a $w$-morphism $W\to X$ with center $P$ and the non-Gorenstein singularities on $W$ consist of a point $cD/2$ and a cyclic quotient point of index $\leq 5$. By Proposition ???, we obtain $3k_2'\leq 5$, so $k_2'=1$.
\end{proof}
Therefore $2k_1'+3k_2'\leq 2k+1$ and so $\Xi(X^+)=2k_1'+3k_2'<2k+2 =\Xi(X)$ and
$$F(X^+)= \frac{3k_1'}{2} +\frac{8k_2'}{3}\leq \frac{3k_1'}{2} +\frac{16}{3}<\frac{6k+9}{4}=F(X).$$

\iffalse
%%%%%%%%%%%%%%%%%%%%%%%%%%%%%%%%%%%%%%%%%%%%%%%%%%%%%%%%%%%%%%%%

\noindent
{\bf 2-2.1.} The index $2$ point $Q_1$ on $C^+$ is of type $cA/2$.
We have $2k_1'-1+3k_2'-1< 2k$, so $\Xi(X^+)=2k_1'+3k_2'<2k+2 =\Xi(X)$. Note that $1\leq k_1'$.
If $ \frac{6k+9}{4}=F(X)\leq 24$, then $k\leq 14$ and $F(X^+)= \frac{3k_1'}{2} +\frac{8k_2'}{3}\leq 24.$

\noindent
{\bf 2-2.2.} The index $2$ point $Q_1$ on $C^+$ is of type $cAx/2$. \\
We have $\Xi(X^+)=4+3k_2'\leq 2k+2=\Xi(X)$.
If $ \frac{6k+9}{4}=F(X)\leq 24$, then $k\leq 14$ and $F(X^+)= 3 +\frac{8k_2'}{3}\leq 24.$

\noindent
{\bf 2-2.3.} The index $2$ point $Q_1$ on $C^+$ is of type $cD/2$. \\
We have $2k_1'+3k_2'-1\leq 2k$, so $\Xi(X^+)=2k_1'+3k_2'<2k+2 =\Xi(X)$. Note that $1\leq k_1'$.
If $ \frac{6k+9}{4}=F(X)\leq 24$, then $k\leq 14$ and $F(X^+)= \frac{3k_1'}{2} +\frac{8k_2'}{3}\leq 24.$

%%%%%%%%%%%%%%%%%%%%%%%%%%%%%%%%%%%%%%%%%%%%%%%%%%%%%%%%%%%%%%%%
\fi

%{\bf 2.1.} If the general elephant of $P \in X$ is of $D$-type, then $P \in X$ is of type $cAx/2$ or $cD/2$ by Table~\ref{ta:table1} and by Lemma~\ref{comp_d}. Let $k'$ be the axial weight of $P \in X$, then $2k' \leq 2k+1$ hence $\Xi(Y \supset C) \leq 2k' < 2k+2 = \Xi(X)$ and
%\[
 % F(X)
%  = \frac{3k'}{2}
%  < \frac{6k+9}{4}
%  = F(Y \supset C).
%\]

%\noindent
%{\bf 2.2.} If the general elephant of $P \in X$ is of  $A$-type, say index $r$ and $\aw k'$. We have $rk'-1 < 2k+1$ and $2r \leq 4$ (cf. Lemma~\ref{comp_d}).
%Hence $r=2$. It follows that $k' \leq k$ and $\Xi(Y \supset C) = 2k+2 > 2k' = \Xi(X)$. Moreover,
%\[
%  F(X)
%  = \frac{3k'}{2}
%  \leq \frac{6k+9}{4}
%  = F(Y \supset C).
%\]

\noindent
{\bf Case 3.} (In case $2.2.2$) $IC$ (cyclic quotient), $\Delta(E_X) = A_{m-1}, \Delta(E_Y) = D_{m}$ and $m\geq 5$ is odd.
There are at most one singularity $P'$ of index 2 on $C^+$ by \cite[Chap 13, Appendix]{KM92}. Let $k'$ be the axial weight of $P'$.\\\

\noindent{\bf 3.1.} The index $2$ point $P'$ on $C^+$ is of type $cA/2$.
We have $2k'-1 \leq  m-1$. Since $m$ is odd, we see $\Xi(X^+)=2k'< m=\Xi(X)$ and $F(X^+)= \frac{3k'}{2}< m - \frac{1}{m}=F(X)$.

\noindent
{\bf 3.2.} The index $2$ point $P'$ on $C^+$ is of type $cAx/2$. \\
We have $\Xi(X^+)=4< m=\Xi(X)$ and $F(X^+)= 3< m - \frac{1}{m}=F(X)$.

\noindent
{\bf 3.3.} The index $2$ point $P'$ on $C^+$ is of type $cD/2$. \\
We have $2k'\leq m-1$, so $\Xi(X^+)=2k'< m=\Xi(X)$ and $F(X^+)= \frac{3k'}{2}< m - \frac{1}{m}=F(X)$.

\noindent
{\bf Case 4.} (In case $2.2.3$) $IA+IA$, $\Delta(E_X) = A_{m-1}+D_{2k}, \Delta(E_Y) = D_{m+2k}$. Note that $m\geq 5$ is odd.
There are at most one singularity $P'$ of index 2 on $C^+$ by \cite[Chap 13, Appendix]{KM92}. Let $k'$ be the axial weight of $P'$.\\

\noindent
{\bf 4.1.} The index $2$ point $P'$ on $C^+$ is of type $cA/2$. \\
We have $2k'-1<m+2k$, so $\Xi(X^+)=2k'< m+2k=\Xi(X)$ and $F(X^+)= \frac{3k'}{2}< m - \frac{1}{m} + \frac{3k}{2}=F(X)$.

\noindent
{\bf 4.2.} The index $2$ point $P'$ on $C^+$ is of type $cAx/2$. \\
We have $\Xi(X^+)=4< m+2k=\Xi(X)$ and $F(X^+)= 3< m - \frac{1}{m} + \frac{3k}{2}=F(X)$.

\noindent
{\bf 4.3.} The index $2$ point $P'$ on $C^+$ is of type $cD/2$. \\
We have $2k'< m+2k$, so $\Xi(X^+)=2k'< m+2k=\Xi(X)$ and $F(X^+)= \frac{3k'}{2}< m - \frac{1}{m} + \frac{3k}{2}=F(X)$.

\noindent
{\bf Case 5.} (In case $2.2.1.1$) $cA/m$, $\Delta(E_X) =\Delta(E_Y)= A_{rk-1}$. \\
By Remark \ref{cAflip}, there are at most two non-Gorenstein singularities $P'_1,P_2'$ on $C^+$ and each $P_i'$ is also of type $cA/r_i'$. For $i=1,2$, let $k_i'$ be the axial weight for the point $P_i'$.
We have $\Xi(X^+)=r_1'k_1'+r_2'k_2'\leq rk=\Xi(X)$. By Lemma \ref{comp_f}, $r_i'\leq r$ for $i=1,2$.

\noindent
{\bf 5.1.}
Suppose that $k> k_1'+k_2'$. %Without loss of generality, we may assume $m_1=m$ and $m_2<m$.
Then
\begin{align*}
&F(X^+) - F(X)=k_1' \paren{r_1'-\frac{1}{r_1'}}+k_2' \paren{r_2'-\frac{1}{r_2'}} - k \paren{r-\frac{1}{r}}\\
&<k_1' \paren{r_1'-\frac{1}{r_1'}}+k_2' \paren{r_2'-\frac{1}{r_2'}} - \paren{k_1'+k_2'} \paren{r-\frac{1}{r}}\\
&=k_1' \paren{r_1'-\frac{1}{r_1'}-r+\frac{1}{r}}+k_2' \paren{r_2'-\frac{1}{r_2'}-r+\frac{1}{r}}\leq 0.
\end{align*}

\noindent
{\bf 5.2.}
Suppose that $k\leq k_1'+k_2'$. Then $r_1'r_2'k\leq r_1'r_2'(k_1'+k_2')\leq rr_2'k_1'+r_1'rk_2'$.
Together with $r_1'k_1'+r_2'k_2'\leq rk$, we obtain $$F(X^+)-F(X)=r_1'k_1'+r_2'k_2'-rk+\paren{\frac{k}{r}-\frac{k_1'}{r_1'}-\frac{k_2'}{r_2'}}\leq 0.$$

%\begin{align*}
%F(Y)-F(Y^+)&=k(m-\frac{1}{m})-k_1(m_1-\frac{1}{m_1})
%-k_2(m_2-\frac{1}{m_2})\\
%&>(k_1+k_2)(m-\frac{1}{m})-k_1(m_1-\frac{1}{m_1})
%-k_2(m_2-\frac{1}{m_2})\\
%&\geq k_2(m_2+1-\frac{1}{m})-k_2(m_2-\frac{1}{m_2})>\frac{k_2}{m_2}>0.
%\end{align*}

\noindent
{\bf Case 6.} (In case $2.2.4$) semistable $IA+IA$, $\Delta(E_X) = A_{r_1k_1-1}+A_{r_2k_2-1}, \Delta(E_Y) = A_{r_1k_1+r_2k_2-1}$. \\
We have $\Xi(X^+) =r_1'k_1'+r_2'k_2' \leq r_1k_1+r_2k_2=\Xi(X)$.
From Proposition \ref{k2Asing}, $C^+$ contains exactly two singularities $cA/r_1'$ and $cA/r_2'$ with axial weights $k_1'$ and $k_2'$ such that  $r_1\geq r_1', r_2\geq r_2', k_1\leq k_1',$ and $k_2\leq k_2'$.
Note that either $r_1> r_1'$ or $r_2> r_2'$ from the proof of Proposition \ref{k2Asing}.
%This is similar to the previous case.
So $r_1'k_1\leq r_1k_1'$ and $r_2'k_2\leq r_2k_2'$ and
\begin{align*}
&F(X^+) - F(X)\\
&=k_1' \paren{r_1'-\frac{1}{r_1'}}+ k_2' \paren{r_2'-\frac{1}{r_2'}}- k_1 \paren{r_1-\frac{1}{r_1}}- k_2 \paren{r_2-\frac{1}{r_2}}\\
&=\paren{r_1'k_1'+r_2'k_2'-r_1k_1-r_2k_2}+\paren{\frac{k_1}{r_1}+\frac{k_2}{r_2}-\frac{k_1'}{r_1'}-\frac{k_2'}{r_2'}}< 0.
\end{align*}

%Suppose $Y\supset C$ is an isolated semistable irreducible extremal neighborhood. It is of type $2.2.1.1$ and $2.2.4$. To be continued........
%Suppose $r_1,r_2$ (resp. $k_1,k_2$) be the Non-Gorenstein indices (resp. axial weights) on the flipping curve $C$ and suppose $r_1^+,r_2^+$ (resp. $k_1^+,k_2^+$) be the Non-Gorenstein indices (resp. axial weights) on the flipped curve $C^+$.
%Then the inequality $F(Y)\geq F(Y^+)$ holds if either $k_1=1$ or $k_2\geq k_1^++k_2^+$ where $r_1\leq r_2$.

\begin{rem}
From above computations, we observe the strict inequality $F(X)>F(X^+)$ except the case when the extremal neighborhood $f\colon X\supset C$ is in $2.2.1.1$, $r=r_1'=r_2'$ and $k=k_1'+k_2'$.
\end{rem}

\end{proof}

\section{Divisorial irreducible extremal neighborhood}
%%-----------------------------------------------------------------------------
In this section, we fix $f\colon X\supset C\to Y\ni Q$ to be an irreducible extremal neighborhood that contracts a divisor to a curve $\Gamma$ as in Definition \ref{etnbd}.
The purpose is to prove that $F(X) > F(Y)$. %We follow the classification of extremal neighborhood of Koll\'{a}r-Mori. %Notice that an extremal neighborhood basically consists of an irreducible component of $f^{-1}(P)$. In what follows, we always pick the components containing the highest index point. 
%Since we are interested in the case that $F(Y) >0$, 
By Lemma \ref{comp_d}, we consider those cases with $\mu_{X \supset C} \geq 4$ only. %Proposition \ref{cEcDcAx} is a result in \cite{normalELP}.

We begin with the following observation which is similar to Proposition \ref{flipcEcDcAx}.
\begin{prop}\label{cEcDcAx}
Let $f\colon X\supset C\to Y\ni Q$ be an irreducible extremal neighborhood that contracts a divisor to a curve $\Gamma$.
If $Q\in \Gamma$ is a non-Gorenstein singularity, then $Q \in Y$ can not be of type $cE/2$,  $cD/3$ nor $cAx/4$.
\end{prop}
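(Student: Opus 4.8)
The plan is to imitate the proof of Proposition \ref{flipcEcDcAx}, with the contracted point $Q\in\Gamma\subset Y$ now playing the role of the flipped point, while exploiting the sharper numerical input available in the divisorial setting: the estimate $2r\le\mu_X$ and the divisibility $r\mid r_X$ of Lemma \ref{comp_d}, where $r$ denotes the index of $Q\in Y$. By Corollary \ref{exclude} I may assume throughout that $\mu_X\ge 4$, since otherwise $Y$ is Gorenstein at $Q$ and there is nothing to prove. The other tool compares the dual graph $\Delta(E_Y)$ listed in Table \ref{ta:table2} with the local general elephant of $Q$ in Table \ref{ta:table1}: by Lemma \ref{deform} the type of the general elephant of $Q$ is bounded above by that of the particular member $E_Y$, i.e.\ by $\Delta(E_Y)$, so that knowing $\Delta(E_Y)$ restricts the possible type of $Q$.

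First I would dispose of $cE/2$ and $cD/3$. If $Q$ were of type $cE/2$ (resp. $cD/3$), its general elephant would be $E_7$ (resp. $E_6$) by Table \ref{ta:table1}, so Lemma \ref{deform} would force $\Delta(E_Y)$ to be at least $E_7$ (resp. at least $E_6$). Scanning Table \ref{ta:table2}, an $E_7$ occurs only in case $2.2.1'.3$, where $\mu_X=2$, and an $E_6$ only in cases $2.2.1.2$ and $2.2.2'$, where $\mu_X=3$ and $\mu_X=4$. Since $cE/2$ has index $2$ and $cD/3$ has index $3$, Lemma \ref{comp_d} demands $\mu_X\ge 4$ and $\mu_X\ge 6$ respectively, contradicting the tabulated values; hence neither type can occur at $Q$.

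The delicate case is $cAx/4$, where $r=4$ and the general elephant is $D_{2k+1}$. The bound $2\cdot 4\le\mu_X$ already removes every case with $\mu_X\le 7$, in particular $2.2.1.3$, $2.2.1'.4$ and $2.2.2'$. In cases $2.2.1.1$ and $2.2.4$ the graph $\Delta(E_Y)$ is of type $A$, so by Lemma \ref{deform} the general elephant of $Q$ is of type $A$ and $Q$ is forced to be $cA/r$, never $cAx/4$. This leaves cases $2.2.2$, $2.2.3$ and $2.2.3'$, in which $m$ is odd. Here I would switch to the divisibility $r\mid r_X$: reading the singularities on $C$ off $\Delta(E_X)$ through Table \ref{ta:table1}, a connected $A$-component comes from a $cA/r'$ germ with $r'\mid m$ (hence $r'$ odd) and an even $D$-component comes from a point of index $2$, so that $r_X$ is either odd or twice an odd number. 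Therefore $4\nmid r_X$, whence $r\ne 4$ and $Q$ is not of type $cAx/4$.

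I expect the main obstacle to lie precisely in this last step, namely in certifying the exact indices of the three-fold points on $C$ in cases $2.2.2$, $2.2.3$ and $2.2.3'$, so as to be sure that $4\nmid r_X$. The dual graph $\Delta(E_X)$ only records the Du Val type of the special elephant, and one has to rule out, for instance, a hidden index-$4$ point masquerading behind an even $D$-component; this is exactly the bookkeeping that the explicit lists of \cite[Appendix, Theorem 13.17, Theorem 13.18]{KM92} are meant to settle, and I would rely on them to make the index count rigorous.
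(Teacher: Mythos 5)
Your proposal is correct and follows essentially the same route as the paper: $cE/2$ and $cD/3$ are excluded exactly as in the paper's proof (Lemma \ref{deform} pins down the possible cases in Table \ref{ta:table2}, and $2r\le\mu_X$ from Lemma \ref{comp_d} gives the contradiction), and for $cAx/4$ your use of $r\mid r_X$ together with the parity of the indices on $C$ is just a repackaging of the paper's observation that no discrepancy of the form $\frac{a_i+\alpha_i}{2}$, $\frac{b_j+\beta_j}{4}$ or $\frac{c_l+\gamma_l}{m}$ (with $m$ odd) can equal $\frac{1}{4}$. The index bookkeeping you flag as the potential weak point is settled, as you anticipate, by the explicit type lists of Koll\'ar--Mori, which the paper's proof relies on in the same way.
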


\begin{proof}
Denoted by $E$ a general elephant near $P\in Y$.

Suppose first that $Q \in Y$ is of type $cE/2$. Since the dual graph $\Delta(E)$ is of type $E_7$ by Table \ref{ta:table1}, it follows from Lemma \ref{deform} that every extremal neighborhood must be of type $2.2.1'.3$. By Lemma \ref{comp_d}, one sees that $\mu_X \ge 4$, which is impossible.

Similarly, if $Q \in Y$ is of $cD/3$, then the dual graph $\Delta(E)$ is of type $E_6$ by Table \ref{ta:table1}. It follows from Lemma \ref{deform} that every extremal neighborhood must be of type $2.2.1.2$, $2.2.1'.3$ or $2.2.2'$.
By Lemma \ref{comp_d}, one sees that $\mu_X \ge 6$, which is impossible.

Finally, if $ Q \in Y$ is of type $cAx/4$, then the dual graph $\Delta(E)$ is $D$-type. It follows from Lemma \ref{deform} that every extremal neighborhood can not be of type $2.2.1.1$ nor $2.2.4$. Therefore, each non-Gorenstein singularity on $X$ has index $2$, $4$ or an odd integer $m\geq 3$.
Taking a resolution over $X$ and computing the discrepancies over $Y$, one sees that each discrepancy
$a(F_i,Y)$ is of the form $\frac{a_i+\alpha_i}{2}, \frac{b_j+\beta_j}{4}$ or $\frac{c_l+\gamma_l}{m}$. None of these expression could be $\frac{1}{4}$, which is impossible.
\end{proof}

%We obtain the analogue result in the case of divisorial contraction to a curve. % for the divisorial contraction to a curve.
Notice that if the extremal neighborhood $X\supset C$ is semistable, that is $\Delta(E_Y)$ is $A$-type, then $Q\in Y$ must be of type $cA/r'$ by Lemma \ref{deform}.

%Thus we may assume that $P\in Y$ is of type $cAx/2,\ cD/2,$ and $cA/r$.
From the classification of extremal neighborhood in Table \ref{ta:table2}, we have the computation (easier case). 
\iffalse
%%%%%%%%%%%%%%%%%%%%%%%%%%%%%%%%%%%%%%%%%%%%%%%%%%%%%%%%%%%%%%%%%%%%%%%%%%%%

\begin{prop}\label{cEcDcAx}
Let $f \colon Y \to X \in Q$ be a divisorial contraction to a curve containing a higher index point  $P \in X$. Then $P \in X$ can not be of type $cE/2$,  $cD/3$ nor $cAx/4$.
\end{prop}

\begin{proof}
Suppose first that $P \in X$ is of type $cE/2$. Since general elephant of $P\in X$ is of type $E_7$, it follows that every extremal neighborhood must be of type $2.2.1'.3$. By Lemma \ref{comp_d}, one sees that $\mu_Y \ge 4$, which is impossible.

Similarly, if $P \in X$ is of type $cD/3$, then the general elephant of $P\in X$ is of type $E_6$. It follows that every extremal neighborhood must be of type $2.2.1'.3$ or $2.2.1.2$.
By Lemma \ref{comp_d}, one sees that $\mu_Y \ge 6$, which is impossible.

Finally, if $ P \in X$ is of type $cAx/4$, then the general elephant of $P\in X$ is of $D$-type. It follows that every extremal neighborhood can not be of type $2.2.1.1$ nor $2.2.4$. Therefore, index of singular points on $Y$ is $2$, $4$ or an odd integer $m\geq 3$.
Take a resolution over $Y$ and computes the discrepancies over $X$, one sees that
$a(E_i,X)$ is of the form $\frac{a_i+\alpha_i}{2}, \frac{b_j+\beta_j}{4}$ or $\frac{c_l+\gamma_l}{m}$. None of these expression could be $\frac{1}{4}$, which is impossible.
\end{proof}

%%%%%%%%%%%%%%%%%%%%%%%%%%%%%%%%%%%%%%%%%%%%%%%%%%%%%%%%%%%%%%%%%%%%%%%%%%%%
\fi

\begin{prop}\label{cAxcD}
Let $f\colon X\supset C\to Y\ni Q$ be an irreducible extremal neighborhood that contracts a divisor to a curve $\Gamma$.
If $Q\in Y$ is of type $cAx/2$ or $cD/2$, 
%Let $f \colon X \to Y \in P$ be a divisorial contraction to a curve containing a higher index point  $P \in Y$ of type $cAx/2$ or $cD/2$.
Then $\Xi(X) \ge \Xi(Y)$ and $F(X) > F(Y)$.
\end{prop}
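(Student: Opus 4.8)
The plan is to run through the Koll\'ar--Mori classification of Table \ref{ta:table2}, read off the terminal singularities on $C\subset X$ from $\Delta(E_X)$ together with Table \ref{ta:table1}, and compare the resulting $\Xi(X),F(X)$ with the values $\Xi(Y),F(Y)$ attached to $Q\in Y$. Since $Q$ has index $2$, Lemma \ref{comp_d} forces $\mu_X\geq 4$, and by Corollary \ref{exclude} the only divisorial neighborhoods that can survive are those of Table \ref{ta:table2} with $\mu_X\geq 4$, namely $2.2.1'.4$, $2.2.2'$ and $2.2.3'$ (the last with $m\geq 5$); every other divisorial case has $\mu_X\leq 3$ and would force $Y$ Gorenstein. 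From Table \ref{ta:table1} the target contributes $(\Xi(Y),F(Y))=(4,3)$ when $Q$ is $cAx/2$, and $(\Xi(Y),F(Y))=(2k_0,\tfrac{3k_0}{2})$ with general elephant $D_{2k_0}$ when $Q$ is $cD/2$ of axial weight $k_0$.

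Next I would pin down the source data case by case. In $2.2.1'.4$ the point on $C\subset X$ is a $cAx/4$ with $\Delta(E_X)=D_{2k+1}$, so $\Xi(X)=2k+2$ and $F(X)=\tfrac{6k+9}{4}$; in $2.2.2'$ the index-$4$ point has $\Delta(E_X)=D_5$, i.e.\ a $cAx/4$ of axial weight $2$, giving $\Xi(X)=6$ and $F(X)=\tfrac{21}{4}$; in $2.2.3'$ the graph $\Delta(E_X)=A_{m-1}+A_1$ records an index-$m$ point together with a $cA/2$ point, giving $\Xi(X)=m+2$ and $F(X)=(m-\tfrac1m)+\tfrac32$. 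In each case the type of $Q$ is constrained through Theorem \ref{flipnormal} and Lemma \ref{deform}: the general elephant of $Q\in Y$ is dominated by $\Delta(E_Y)$, which bounds the axial weight $k_0$ of a $cD/2$ point, for instance $2k_0\leq 2k$ in $2.2.1'.4$, $2k_0\leq 5$ in $2.2.2'$, and $2k_0\leq m+1$ in $2.2.3'$.

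With the data in hand, both inequalities reduce to elementary arithmetic. For $\Xi$ one checks $2k+2>2k_0$ and $2k+2>4$; $6>4$ and $6>2k_0$; $m+2>m+1$ and $m+2>4$, respectively, so $\Xi(X)\geq\Xi(Y)$ throughout. For $F$ the corresponding comparisons $\tfrac{6k+9}{4}>\tfrac{3k_0}{2}$, $\tfrac{21}{4}>3$, and $(m-\tfrac1m)+\tfrac32>\tfrac{3(m+1)}{4}$ are all strict, yielding $F(X)>F(Y)$.

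I expect the main obstacle to be the bookkeeping of the second paragraph: correctly matching each Koll\'ar--Mori label to the actual terminal singularities on $C\subset X$, with their indices and axial weights, so that Table \ref{ta:table1} applies, and securing the sharp bound on $k_0$ from $\Delta(E_Y)$ via Lemma \ref{deform}. Once these identifications are in place---especially the two-point case $2.2.3'$, where one must separately account for the index-$m$ and the $cA/2$ contributions---the numerical inequalities are immediate.
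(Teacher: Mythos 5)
Your overall strategy coincides with the paper's: reduce to the surviving cases of Table \ref{ta:table2} using Lemma \ref{comp_d}, read off $\Xi(X)$ and $F(X)$ from $\Delta(E_X)$ via Table \ref{ta:table1}, bound the axial weight of $Q$ through $\Delta(E_Y)$ and Lemma \ref{deform}, and compare. The arithmetic you carry out in the cases you do treat is correct. However, there is a genuine gap in your case reduction. You assert that the only divisorial neighborhoods with $\mu_X\geq 4$ are $2.2.1'.4$, $2.2.2'$ and $2.2.3'$, and that ``every other divisorial case has $\mu_X\leq 3$.'' This is false by Table \ref{ta:table2} itself: case $2.2.1.3$ has $\mu_{C\subset X}=4$, and cases $2.2.2$ and $2.2.3$ have $\mu_{C\subset X}=m$ with $m$ odd, which is $\geq 5$ once Lemma \ref{comp_d} is imposed. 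All three have $D$-type $\Delta(E_Y)$ ($D_{2k+1}$, $D_m$, $D_{2k+m}$ respectively), so none of them is ruled out when $Q$ is of type $cAx/2$ or $cD/2$, and the paper's proof indeed treats all of them (its Cases 1, 3 and 5). The omission of $2.2.1.3$ is harmless since its data is identical to $2.2.1'.4$, but $2.2.2$ ($\Delta(E_X)=A_{m-1}$, $\Delta(E_Y)=D_m$) and $2.2.3$ ($\Delta(E_X)=A_{m-1}+D_{2k}$, $\Delta(E_Y)=D_{2k+m}$) require their own, different, computations: e.g.\ in $2.2.2$ one needs $2k'\leq m$ and $\frac{3k'}{2}<m-\frac{1}{m}$, and in $2.2.3$ one needs $\frac{3k'}{2}\leq \frac{3k}{2}+\frac{3m}{4}<\frac{3k}{2}+m-\frac{1}{m}$. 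These are easy but they are not covered by anything you wrote, and your stated reason for discarding them is incorrect.

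Two smaller remarks. First, for the $cAx/2$ target the paper avoids the case analysis entirely: Lemma \ref{comp_d} gives a point of index $\geq 4$ on $C$, and any such point already contributes $\Xi\geq 4$ and $F>3$ by Table \ref{ta:table1}, which settles that half in one line; your case-by-case treatment works for the cases you list but inherits the same incompleteness. Second, the exclusion of $2.2.1.1$ and $2.2.4$ should be justified not by $\mu_X$ (they can have $\mu_X\geq 4$) but by the fact that $\Delta(E_Y)$ is $A$-type there, which is incompatible with $Q$ having a $D$-type general elephant by Lemma \ref{deform}; you implicitly use this but fold it into the erroneous $\mu_X$ claim.
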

\begin{proof}
%Since the general elephant of $P\in X$ is $D_{k+2}$, every extremal neighborhood can not be of type $2.2.1.1$ nor $2.2.4$.
Suppose $Q \in Y$ is of type $cAx/2$. By Lemma \ref{comp_d}, %$\mu_{X \supset C} \geq 4$. 
there exists at least one singularity of index greater or equal to 4 in the extremal neighborhood $X\supset C$, so $\Xi(X) \ge 4=\Xi(Y)$ and $F(X) > 3=F(Y)$ by Table~\ref{ta:table1}. % and Table~\ref{ta:table2}.

Suppose that $Q \in Y$ is of type $cD/2$. 
%We show that this inequalities hold for the irreducible extremal neighborhood containing the highest index point.
Since the general elephant of $Q\in Y$ is $D_{2k'}$ where $k'$ is the axial weight, the extremal neighborhood can not be of type $2.2.1.1$ nor $2.2.4$.
By Lemma \ref{comp_d}, $\mu_{X \supset C} \geq 4$, so we don't need to consider the extremal neighborhood of type $2.2.1.2$, $2.2.1'.1$, $2.2.1'.2$, $2.2.1'.3$ and $2.2.5$.

\noindent
{\bf Case 1.} (In case $2.2.1.3$) $IIA$ ($cAx/4$), $\Delta(E_X) = \Delta(E_Y) = D_{2k+1}$. \\
We have $2k' \leq 2k+1$, hence $\Xi(Y)=2k' < 2k+2 = \Xi(X)$ and
\[
  F(Y)
  = \frac{3k'}{2}
  < \frac{6k+9}{4}
  = F(X).
\]
\noindent
{\bf Case 2.} (In case $2.2.1'.4$) $IIA+III$ ($cAx/4$), $\Delta(E_X) = \Delta(E_Y) = D_{2k+1}$. \\
This case is the same as the Case 1.

\noindent
{\bf Case 3.} (In case $2.2.2$)  $IC$ (cyclic quotient), $\Delta(E_X) = A_{m-1}, \Delta(E_Y) = D_{m}$ and $m\geq 5$ is odd. \\
We have $2k' \leq m$, so $ \Xi(Y)=2k' \leq m=\Xi(X)$ and
\[
  F(Y)= \frac{3k'}{2} < m - \frac{1}{m}=F(X).
\]

\noindent
{\bf Case 4.}  (In case $2.2.2'$) $IIB$ ($cAx/4$ with $k=2$), $\Delta(E_X) = D_{5}, \Delta(E_Y) = E_{6}$. \\
%This case needs be studied deeply.
We have $2k'\leq 6$, so $ \Xi(Y)=2k'\leq 6=\Xi(X)$ and
\[
  F(Y)= \frac{3k'}{2} <  \frac{6\cdot 2+9}{4}=F(X).
\]
%By counting the number of exceptional divisors with discrepancy $\frac{1}{2}$ over $P\in X$ as in the proof of Proposition \ref{m2}, one can observe $k'\leq 2$ and hence $F(X)\leq F(Y\supset C)$.

\noindent
{\bf Case 5.}  (In case $2.2.3$)  $IA+IA$, $\Delta(E_X) = A_{m-1}+D_{2k}, \Delta(E_Y) = D_{2k+m}$. Note that $m\geq 3$ is odd. \\
We have $2k' \leq 2k+m$, so $ \Xi(Y)=2k'\leq 2k+m=\Xi(X)$ and
\[
  F(Y)
  = k' \frac{3}{2}
  \leq k \frac{3}{2} + m \frac{3}{4}
  < k \frac{3}{2} + m - \frac{1}{m}
  = F(X).
\]

\noindent
{\bf Case 6.}  (In case $2.2.3'$) $IA+IA+III$, $\Delta(E_X) = A_{m-1}+A_1, \Delta(E_Y) = D_{m+2}$ where $m\geq 3$ is odd. \\
Since $2k' \leq m+2$, we have $\Xi(Y) = 2k' \leq   m+2 \leq \Xi(X)$. Also
\[
  F(Y)
  = k' \frac{3}{2}
  \leq  \frac{3m+6}{4}
  < \frac{3}{2} + m - \frac{1}{m}
  = F(X).
\]

\end{proof}

\iffalse
%%%%%%%%%%%%%%%%%%%%%%%%%%%%%%%%%%%%%%%%%%%%%%%%%%%%%%%%%%%%%%%%%%%%%

\begin{lem}\label{Fhalf}
Let $f\colon X\supset C\to Y\ni P$ be an irreducible extremal neighborhood that contracts a divisor to a curve $\Gamma$.

If $P \in Y$ of type $cA/r'$. Then $\Xi(X) \ge \Xi(Y)-1$ and $F(X) > \frac{1}{2}F(Y)$.
\end{lem}

\begin{proof}
Suppose that $P \in Y$ is of type $cA/r$ of axial weight $k'$. By Table 1, unless $X$ contains $cE/2$ points, we have $$\Xi(X) \ge DV(X) \ge DV(Y) = \Xi(Y)-1.$$ Also, we have $F \ge \frac{1}{2}\Xi$ for all types of singularities. Hence
$$F(X) \ge \frac{1}{2} DV(Y) > \frac{1}{2} F(Y).$$ This completes the proof.
\end{proof}

Lemma \ref{nbdcomp1} and Corollary \ref{nbdcomp2} might be incorrect.
\begin{lem}\label{nbdcomp1}
Let $f \colon X\to Y \in P$ be a divisorial contraction to a curve contains a higher index point  $P \in Y$ of type $cA/r$. Suppose that either $f^{-1}(P)$ is irreducible  containing two higher index points. Then $\Xi(X) \ge \Xi(Y)$. Moreover, if $F(X) \le 24$, then $F(X) \geq F(Y)$.
\end{lem}

\begin{cor}\label{nbdcomp2}
Let $f \colon X \to Y \in P$ be a divisorial contraction to a curve containing a higher index point  $P \in Y$ of type $cA/r$. Suppose furthermore that either $f^{-1}(P)$ has more than two irreducible components or $f^{-1}(P)$ is irreducible containing only one higher index point. Then $\Xi(X) \ge \Xi(Y)$ and $F(X) \ge F(Y)$.
\end{cor}

%%%%%%%%%%%%%%%%%%%%%%%%%%%%%%%%%%%%%%%%%%%%%%%%%%%%%%%%%%%%%%%%%%%%%
\fi

The following computations are similar to the previous case $cD/2$.

%The following Proposition follows similarly.
\begin{prop}\label{cA}
Let $f\colon X\supset C\to Y\ni Q$ be an irreducible extremal neighborhood that contracts a divisor to a curve $\Gamma$.
If $Q \in Y$ of type $cA/r'$, then $\Xi(X) \ge \Xi(Y)$ and $F(X) > F(Y)$.
%Let $f \colon X \to Y \in P$ be a divisorial contraction to a curve containing a higher index point  $P \in Y$ of type $cA/r'$.
%Let $E_Y$ be a general elephant of $p\in X$.
%Then $\Xi(X) \ge \Xi(Y)$ and $F(X) \ge F(Y)$. %except the case $2.2.3$.
%If $E_Y$ is not of type $A$, then $\Xi (X)\geq \Xi(Y)$ and $F(X)\geq F(Y)$. Suppose $E_X$ is of type $A$ and $F(Y)\leq 24$. Then $F(X)\leq F(Y)$ except a few cases in Remark \ref{except}.
%Suppose furthermore that either $f^{-1}(P)$ has more than two irreducible components or $f^{-1}(P)$ is irreducible containing only one higher index point. Then $\Xi(Y) \ge \Xi(X)$ and $F(Y) \ge F(X)$.
\end{prop}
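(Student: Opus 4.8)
The plan is to run through the Koll\'ar--Mori classification of Table \ref{ta:table2} in the same spirit as Proposition \ref{cAxcD}, now reading off the constraints forced by $Q$ being of type $cA/r'$. Write $k'$ for the axial weight of $Q$, so that $\Xi(Y)=r'k'$ and $F(Y)=k'(r'-\tfrac1{r'})$ by Table \ref{ta:table1}. The general elephant of $Q$ is $A_{r'k'-1}$, a degeneration of the specific element $E_Y$, so Lemma \ref{deform} bounds $r'k'$ by one plus the Milnor number of $\Delta(E_Y)$; when $\Delta(E_Y)$ is of type $D$ or $E$ the change of type to $A$ forces a strict drop, giving $r'k'\le(\text{Milnor number of }\Delta(E_Y))$. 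Since an $A$-type Du Val germ specializes from every Du Val type, a priori every row of Table \ref{ta:table2} with $\mu_X\ge 4$ must be examined, namely $2.2.1.1$, $2.2.1.3$, $2.2.1'.4$, $2.2.2$, $2.2.2'$, $2.2.3$, $2.2.3'$ and $2.2.4$; the standing hypothesis $\mu_X\ge 4$ (Lemma \ref{comp_d}) discards the remaining rows.

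I would first dispatch the inequality $\Xi(X)\ge\Xi(Y)$. In each row $\Xi(X)$ is read off from $\Delta(E_X)$ via Table \ref{ta:table1}, and in every case it equals or exceeds the Milnor-number bound on $r'k'=\Xi(Y)$ obtained above, so $\Xi(X)\ge\Xi(Y)$ is pure bookkeeping.

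The substance is $F(X)>F(Y)$, and the organizing principle is the estimate $2r'\le\mu_X$ of Lemma \ref{comp_d}: $X$ always carries a point whose index $\mu_X$ strictly exceeds $r'$. Writing $F$ as the basket sum $\sum_j r_j\bigl(1-\tfrac1{r_j^2}\bigr)$, the argument is immediate whenever every basket index $r_j$ of $X$ satisfies $r_j\ge r'$: then $F(X)\ge\Xi(X)\bigl(1-\tfrac1{r'^2}\bigr)\ge\Xi(Y)\bigl(1-\tfrac1{r'^2}\bigr)=F(Y)$, and the index-$\mu_X$ point with $\mu_X>r'$ makes the first inequality strict. This settles the uniform single-index rows $2.2.1.1$ and $2.2.2$ for all $r'$, and it settles the $cAx/4$ rows $2.2.1.3$, $2.2.1'.4$, $2.2.2'$ as well, since there $\mu_X=4$ already forces $r'=2$ and the computation is then verbatim the $cD/2$ case of Proposition \ref{cAxcD}. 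Row $2.2.3'$ is also fine: its second point is an $A_1$ of fixed contribution $(\Xi,F)=(2,\tfrac32)$, so any loss stays bounded and a direct numerical check works.

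The hard part will be rows $2.2.3$ and $2.2.4$ when $r'\ge 3$, precisely because there $X$ carries a low-index point of \emph{unbounded} axial weight---the $cD/2$ summand $D_{2k}$ in $2.2.3$, and a $cA/r_2$ summand with $r_2<r'$ in $2.2.4$. Such a point inflates $\Xi(X)$ while contributing only $\tfrac32$ per unit axial weight (resp.\ $r_2-\tfrac1{r_2}$) to $F(X)$, so the crude bound $\Xi(Y)\le\Xi(X)$ is too weak to force $F(Y)<F(X)$: one can exhibit quadruples meeting all the Milnor-number and divisibility constraints for which the naive estimate fails. The remedy I would pursue is to bound the axial weight $k'$ of $Q$ \emph{directly}, rather than through $\Xi(X)$, exploiting that the index-$r'$ behaviour of $Q$ is governed only by the index-matching ($A_{m-1}$, resp.\ $cA/r_1$) summand of $X$ and not by the low-index summand. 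Concretely I would either track the exceptional divisor of discrepancy $\tfrac1{r'}$ over $Y$ through a common resolution, as in the proof of Lemma \ref{comp_d}, or invoke a Mori-type rigidity for the axial weights analogous to Proposition \ref{k2Asing}. Securing this independent bound on $k'$ is the step I expect to be delicate, and it is exactly what upgrades the estimate to the strict inequality $F(X)>F(Y)$.
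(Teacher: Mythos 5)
Your case division, the reduction to rows with $\mu_X\ge 4$, and the uniform estimate $F(X)\ge \Xi(X)\bigl(1-\tfrac{1}{r'^2}\bigr)\ge \Xi(Y)\bigl(1-\tfrac{1}{r'^2}\bigr)=F(Y)$ (valid when every basket index of $X$ is at least $r'$, strict when one exceeds $r'$) correctly dispose of rows $2.2.1.1$, $2.2.1.3$, $2.2.1'.4$, $2.2.2$ and $2.2.2'$, and in fact streamline the paper's row-by-row computations there. You also correctly locate the difficulty in rows $2.2.3$ and $2.2.4$. But for those two rows you only name the obstacle without overcoming it, and that is where the substance of the paper's proof lies. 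For $2.2.4$ the missing ingredient is not a bound on the axial weight $k'$ but on the index: by Mori's description of semistable extremal neighborhoods, \cite[Theorem 4.5]{Mori02}, one has $r'=\gcd(r_1,r_2)$, hence $r'\le\min\{r_1,r_2\}$, and combined with $2r'\le\mu_X=\max\{r_1,r_2\}$ this forces $r_1\ne r_2$. After that, every basket index of $X$ is $\ge r'$ with at least one strictly larger, and your own uniform estimate closes the case; your guess of a ``Mori-type rigidity for the axial weights'' points at the right source but the wrong invariant.

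For row $2.2.3$ with $r'\ge 3$ the paper does bound $\Xi(Y)=r'k'$ directly (by $m+2$), as you anticipate, but neither of your two proposed mechanisms produces that bound: rerunning the discrepancy computation of Lemma \ref{comp_d} only yields $2r'\le\mu_X=m$ and $r'\mid 2m$, which does not control $k'$. The paper instead factors $f$ via \cite[Theorem 3.3]{CH11} into $w$-morphisms, sequences of flips/flops, and divisorial contractions to points, invokes Kawakita's classification \cite[Theorem 1.2]{Kawakita2} and Kawamata's theorem \cite{Kaw96} to constrain the singularities appearing at each stage, and tracks the index-two singularity coming from the $D_{2k}$ summand through the intermediate flips to show that some $E_{Y_{j_0}}$ is a partial resolution of $E_Y$ with controlled extra singularities, whence Corollary \ref{exclude} gives $\Xi(Y)\le m+2$. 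This is a genuinely different and much longer mechanism than anything you sketch, so the hard half of the proposition remains unproved in your write-up. A smaller quibble: in row $2.2.3'$ your ``direct numerical check'' also needs the divisibility input $r'\mid r_X=2m$ to kill the boundary case $r'k'=m+2$ (it forces $r'\mid 4$, hence $r'=2$ since $m$ is odd); without it, $F(Y)\le(m+2)\bigl(1-\tfrac{1}{r'^2}\bigr)$ is not obviously below $F(X)=m-\tfrac{1}{m}+\tfrac{3}{2}$.
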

\begin{proof}
%Let $\supset C$ of $P\in X$ be an irreducible extremal neighborhood containing a singularity of highest index.
%Using Lemma \ref{Fhalf}, we may assume that there is no other singularity appearing in other irreducible extremal neighborhoods.
Let $k'$ be the axial weight of $Q$.

Suppose $E_X$ is not $A$-type.
Since $\mu_{X \supset C} \geq 4$,
the extremal neighborhood is not of type $2.2.1.2$, $2.2.1'.1$, $2.2.1'.2$, $2.2.1'.3$, and $2.2.5$.

\noindent
{\bf Case 1.} (In case $2.2.1.3$) $IIA$ ($cAx/4$), $\Delta(E_X) = \Delta(E_Y) = D_{2k+1}$. \\
We have $r'k'-1 < 2k+1$.  By Lemma~\ref{comp_d}, we see $2r' \leq 4$. So $r'=2$. It follows that $k' \leq k$ and $\Xi(Y)=2k'<2k+2=\Xi(X)$. Moreover,
\[
  F(Y)
  = \frac{3k'}{2}
  < \frac{6k+9}{4}
  = F(X).
\]

\noindent
{\bf Case 2.} (In case $2.2.1'.4$)  $IIA+III$ ($cAx/4$), $\Delta(E_X) = \Delta(E_Y) = D_{2k+1}$. \\
This case is the same as the Case 1.

\noindent
{\bf Case 3.} (In case $2.2.2$)  $IC$ (cyclic quotient), $\Delta(E_X) = A_{m-1}, \Delta(E_Y) = D_{m}$ and $m\geq 5$ is odd. \\
We have $r'k'-1 \leq m-1$. One has $\Xi(X)=r'k' \leq m=\Xi(Y)$ and $F(Y)=r'k'-\frac{k'}{r}< m-\frac{1}{m}=F(X)$.

\noindent
{\bf Case 4.} (In case $2.2.2'$) $IIB$ ($cAx/4$ with $k=2$), $\Delta(E_X) = D_{5}, \Delta(E_Y) = E_{6}$. \\
We have $r'k'-1\leq 6$. Since $\mu_{X\supset C}=4$, by Lemma \ref{comp_d}, we see that %$4$ is the maximal index of the singularity on $Y\supset C$, so 
$r'=2$ and $k' \leq 3$.
Hence $ \Xi(Y)=2k'\leq 6=\Xi(X)$ and
\[
  F(Y)= \frac{3k'}{2} <  \frac{6\cdot 2+9}{4}=F(X).
\]

%Inequalities holds by Table~\ref{ta:table2}.

\noindent
{\bf Case 5.} (In case $2.2.3$) $IA+IA$, $\Delta(E_X) = A_{m-1}+D_{2k}, \Delta(E_Y) = D_{2k+m}$. Note that $m\geq 3$ is odd.
%\underline{Should argue deeply and carefully.}\\

Now $r'k'-1 < m+2k$.
Since there exists an exceptional divisor over $Y$ with discrepancy $1/r$, we see 
%Notice that %if $f^{-1}(P)$ is irreducible, then 
$m=r'l$ for some positive integer $l$. Since $m$ is odd, so are $r'$ and $l$. 
Suppose $k=1$. If $r'k'= m+2k$, then $r'k'=m+2=r'l+2$ and hence $r'=1$.
If $r'k'<m+2k=m+2$, then 
\[F(Y)=r'k'-\frac{k'}{r'}\leq m+1- \frac{k'}{r'}< m -\frac{1}{m}+\frac{3}{2}=F(X).\]
So we may assume that $k\geq 2$.
If $r'=2$, then $\Xi(Y)=2k'\leq m+2k=\Xi(X)$ and
\[F(Y)=\frac{3}{2}k' \leq \frac{3}{2}k+\frac{3}{4}m< \frac{3}{2}k+m -\frac{1}{m} =F(X).\]
%It is impossible that $r>2$  by Proposition \ref{m2}.

Suppose $r'\geq 3$. From Proposition \ref{flipcEcDcAx}, $Q\in Y$ is of type $cA/r'$.
\begin{claim}
$\Xi (Y) \leq m+2$.
\end{claim}
\begin{proof}

Put $f=f_0$, $X=X_0$, $Y=Y_0$, $\Gamma=\Gamma_0$ and $Q=Q_0$ and $F$ the exceptional divisor for $f$.
By \cite[Theorem 3.3]{CH11}, there exists a smallest positive integer $n$ such that for each $j=0,1,...,n-1$, we have the following factorization

%\[
%\xymatrix{
%W_j\ar@{-->}[rr]^{h_j} \ar[d]_{g_j} & &X_{j+1}\ar[d]^{f_{j+1}} \\
%X_j\ar@{-->}[rr]\ar[rd]_{f_j}&& Y_{j+1}\ar[ld]^{f^+}\\
%&Y_j}
%\]
\[
\xymatrix{
W_j\ar@{-->}[rr]^{\theta_j} \ar[d]_{g_j} & &X_{j+1}\ar[d]^{f_{j+1}} \\
X_j \ar[d]_{f_j}&& Y_{j+1}\ar[lld]^{k_{j+1}}\\
Y_j&},
\]
where $g_j$ is a $w$-morphism, $\theta_j$ is a composition of flips and probably a flop, $f_{j+1}$ contracts the proper transform $F_{X_{j+1}}$ to the curve $\Gamma_{j+1}$, $k_{j+1}$ is a divisorial contraction to a point $Q_{j}$, and the point $Q_n\in \Gamma_n$ is Gorenstein. %=\xi_{t-1}\circ\cdots\circ\xi_0\colon Z\dasharrow Z_t$ is a composition of flips and probably a flop $\xi_i\colon Z_i\dasharrow Z_{i+1}$.
Denote by $G^j$ the exceptional divisor of $g_j$. Then the proper transform $G^j_{Y_{j+1}}$ is the exceptional divisor of $k_{j+1}$.

Since $Q$ is of type $cA/r'$, by Kawakita's classification in \cite[Theorem 1.2]{Kawakita2}, %for each $j$, 
each $k_{j+1}$ is a weighted blow up and there exists at most three non-Gorenstein singularities on the exceptional divisor $G^j_{Y_{j+1}}$ where at most one singularity is not cyclic quotient. %with weight 
%$(\frac{{r_1^j}'}{r'},\frac{{r_2^j}'}{r'},\frac{{a^j}'}{r'},1)$
%where $K_{Y_{j+1}}=k_{j+1}^*K_{Y_j}+\frac{{a^j}'}{r'}F^j_{X_{j+1}}$.
%There are at most two cyclic quotient singularities ${r_1^j}'$ and ${r_2^j}'$   
%By changing of coordinates, we may assume ${r_1^j}' \leq {r_2^j}'$.
%There are two (resp. one) cyclic quotient singularities of indices ${r_1^j}'$, ${r_2^j}'$ (resp. ${r_2^j}'$) on $F^j_{X_{j+1}}$ if ${r_1^j}'>1$ (resp. ${r_1^j}'=1$).
Since $Q_{t+1}\in G^t_{Y_{t+1}}\cap \Gamma_{t+1}$ for every $t=0,...,n-2$,
$Q_{t+1}$ cannot be a cyclic quotient singularity by Kawamata in \cite{Kaw96}.
In particular, each $Q_{t+1}$ is of type $cA/r'$.
%Furthermore, 

%Notice that the higher index point on $C$ is a cyclic quotient singularity of odd index $m$ and $-K_X.C=1/2m$ by \cite[Remark]{Mori07}. 
%Together with the computation in the Exceptional $IA+IA$ case in the proof of \cite[Theorem 3.1]{CH11},
%we see that $-K_W.C_W=0$ where $C_W$ is the proper transform of $C$.  

Let $E\in |-K_X|$ denote a general elephant of the extremal neighborhood $X\supset C$.
By our construction and \cite[Lemma 2.7]{CH11},
we see that for every $j=0,...,n-1$, $E_{W_j}\in |-K_{W_j}|$, $E_{X_{j+1}}\in |-K_{X_{j+1}}|$ and hence $E_{Y_{j+1}}\in |-K_{Y_{j+1}}|.$ % where $E_{W_j}$ and $E_{X_j}$ are the proper transforms of $E$. 
%Suppose $C'$ is a flipping curve appearing in $\theta_1,...,\theta_n$ above and contains at least one singularity of index $2$. Denote by $C'^+$ the corresponding flipped curve.
%Then $C'$ must be one of cases 2.2.1.1, 2.2.3, and 2.2.4.
%By Corollary \ref{exclude}, the singularity on $C'^+$ is Gorenstein in the case 2.2.1.1.
%From \cite[Chap 13, Appendix]{KM92}, there exists at most one non-Gorenstein singularity which is of index 2 on $C'^+$ in the case 2.2.3.
%The axial weights of the non-Gorenstein singularities on $C'^+$ are larger or equal to those on $C'$ in the case 2.2.4 by Proposition \ref{k2Asing}.
Denote by $P_1\in C$ the singularity of index two on the extremal neighborhood $X\supset C$. By \cite[Chap 13, Appendix]{KM92} and Proposition \ref{k2Asing}, 
%Otherwise, 
there must exist singularities $P_1,...,P_s$ satisfying all of the following conditions.
\begin{enumerate}
\item $s\geq 1$ and each $P_i$ has index $2$.
%\item $Q_1$ and $Q_s$ are connected by a sequence of flips or flops $C_1,...,C_{s-1}$ in $\theta_1,...,\theta_n$.
\item for every $i=1,...,s-1$, there is a flipping or flopping curve $C_i$ in $\theta_1,...,\theta_n$ with $P_i\in C_i$ and $P_{i+1}\in C_i^+$. % are connected by a f$Q_{i+1}$ is on the flipped (resp. flopped) curve $C_i^+$ where $$
\item  if $C_i$ is in the case 2.2.4, the axial weight of $P_{i+1}$ is larger or equal to that of $P_i$.
\item %either there is no flipping curve containing $Q_s$ or 
if $C'$ is a flipping curve containing $P_s$, then there is no non-Gorenstein point on flipped curve $C'^+$.
\end{enumerate}
Suppose %the birational map $X\dasharrow X_n$ is isomorphic in a neighborhood of $Q_1$ or
$P_s$ is contained in the fiber of $f_{n}$.
As $E_{Y_{n}}$ is a partial resolution of $E_Y$, by Corollary \ref{exclude}, it follows that $\Xi (Y)\leq \Xi (Y_{n})\leq m$.
Otherwise, we may further assume that there is no flipping curve containing $P_s$.
So there exists a positive integer $j_0\leq n$ such that $P_s\in G^{j_0-1}_{X_{j_0}}$ and $P_s\not\in f_{j_0}^{-1}(Q_{j_0})$.
 %if and only if the axial weight of $P_j$ is greater than $({r_1^j}'+{r_2^j}')/r'$.
%Thus, we have  \[ \Xi(Y_{j+1})= \left\{
%\begin{array}{ll}
%\Xi (Y_j) & \mbox{if } {r_j^1}'>1, \\
%\Xi (Y_j)-1 & \mbox{if } {r_j^1}'=1. \\
%\end{array} \right. \]
%By Lemma \ref{comp_d}, we may further assume that the center of each $g_j$ has index greater than $5$.
%It follows that each center of $g_j$ is of $cA/r_j$ by Table \ref{ta:table1}.
%We obtain $\Xi(W_j)=\Xi(X_j)$ or $\Xi(W_j)=\Xi(X_j)-1$ as well.
In particular, $P_s$ is the cyclic quotient $\frac{1}{2}(1,1,1)$.
As $E_{Y_{j_0}}$ is a partial resolution of $E_Y$, by Corollary \ref{exclude}, it follows $\Xi (Y)\leq \Xi (Y_{j_0})\leq m+2$.

\iffalse
%%%%%%%%%%%%%%%%%%%%%%%%%%%%%%%%%%%%%%%%%%%%%%%%%%%%%%%%%%%%%%%%%%%%

we define a positive integer $t$ and the singularities $Q_2,...,Q_s$ of index two as follows.
For every $i=1,....,s$, there exists a flipping (resp. flopping) curve $C_i$ appearing in $\theta_1,...,\theta_n$ above such that $Q_{i}\in C_i$. Denote by $C_i^+$ the corresponding flipped (resp. flopped) curve. If $C_i$ is a flopping curve, we put $Q_{i+1}\in C_i^+$ such that $Q_i$ and $Q_{i+1}$ have the same singularity.
Otherwise $C_i$ must be one of cases 2.2.1.1, 2.2.3, and 2.2.4 in Table \ref{table3}.
In the case 2.2.1.1, by Corollary \ref{exclude}, the singularity on $C_i^+$ is Gorenstein. We put $t=1$.
In the case 2.2.3, %rom, there exists at most one non-Gorenstein singularity which is of index 2 on $C_i^+$ 
we put $t=1$ if there is no non-Gorenstein singularity on $C_i^+$. Otherwise we put $Q_{i+1}$ the unique non-Gorenstein singularity on $C_i^+$ (cf. \cite[Chap 13, Appendix]{KM92}).
In the case 2.2.4, %the axial weights of the non-Gorenstein singularities on $C_i^+$ are larger or equal to those on $C_i$.
we put $Q_{i+1}\in C_i^+$ where the axial weight of $Q_{i+1}$ is larger or equal to that of $Q_i$ (cf. Proposition \ref{k2Asing}). 
If there is no flipping curve containing $Q_s$, we put $t=s$.

  $Q_{i+1}$ is contained in the flipped curve $ C'^+$.   

some exceptional divisor $F^{j_0}_{X_{j_0+1}}$ contains a flipped curve $C_{i_q}^+$ that contributes from the index two point of $X\supset C$. 
Now, $E_{Y_{j_0}}$ is a partial resolution of $E_Y$, so we obtain $\Xi (Y)\leq \Xi (Y_{j_0})\leq m+2$ as well. %so$\Xi (Y_d)\leq m+2$ for every $d\geq j_0$, thus $\Xi (Y)\leq \Xi (Y_d)\leq m+2$.

%Notice that the center of $w$-morphism $g_0$ is 
%Let $E\in|-K_X|$ is a general elephant around $P=P_0$. So $\Delta(E)=A_{r'k'-1}$ with the axial weight $k'$. From \cite{YPG}, we may assume that the local equation of $E$ at $P$ corresponds to $y_3^0$, so we observe that for each $j=0,1,...,n-1$, the proper transform $E_{Y_{j+1}}$ of $E$ is an element of the linear system $|-K_{Y_{j+1}}|$ at $P_{j+1}$. %Denote by $\Xi^n$ the contribution from the non-Gorenstein points on the fiber $f_n^{-1}(P_n)$ 

%%%%%%%%%%%%%%%%%%%%%%%%%%%%%%%%%%%%%%%%%%%%%%%%%%%%%%%%%%%%%%%%%%%%
\fi

\end{proof}
Therefore we have 
\[F(Y)=r'k'-\frac{k'}{r'}\leq m+2- \frac{k'}{r'} <  m -\frac{1}{m}+\frac{3}{2}k  =F(X).\]

\noindent
{\bf Case 6.} (In case $2.2.3'$) $IA+IA+III$, $\Delta(E_X) = A_{m-1}+A_1, \Delta(E_Y) = D_{m+2}$ where $m\geq 3$ is odd. \\ %\underline{Should argue deeply and carefully.}\\
Now $r'k'-1 \leq m+1$, so we have $\Xi(Y)=r'k'\leq m+2=\Xi(X)$.
Suppose that  $r'k'  \leq m+1$, then clearly,
\[
  F(Y)=k' \paren{r'-\frac{1}{r'}} < m+1 \leq m - \frac{1}{m} + \frac{3}{2}=F(X).
\]
Suppose that $r'k' = m+2$.
%If there exists a non-Gorenstein singularity on $f^{-1}(P)-C$, then
%\[
%  F(Y)=k' \paren{r-\frac{1}{r}} < m+2 \leq m - \frac{1}{m} + 3=F(X\supset C)+\frac{3}{2}\leq F(X).
%\]
%So we may assume that the non-Gorenstein singularities on $f^{-1}(P)$ are contained on $C$.
From definition \ref{etnbd} and Lemma~\ref{comp_d}, we see that $f^{-1}(Q)=C$, %$f\colon X\to Y$ is extremal neighborhood in the definition \ref{etnbd}, so  
and $r' \mid 2m$. It follows that $r' \mid 4$. Since $m$ is odd, $r'=2$.  In this situation,
\[
  F(Y)
  = \frac{3k'}{2}
  = \frac{3(m+2)}{4}
  < F(X).
\]

\noindent
Suppose now $E_X$ is of type $A$.

\noindent
{\bf Case 7.} (In case $2.2.1.1$) $cA/m$, $\Delta(E_X) = \Delta(E_Y) = A_{rk-1}$. \\
%On $Y$, there is a point of type $cA/m$ of $\aw = k$.
%By Lemma~\ref{deform}, $P \in Y$ has general elephant of type $A_n$ with $n \leq rk-1$. In particular, $P\in Y$ is of type $cA/r'$.
Suppose that $Q \in Y$ is a point of index $r'$ with axial weight $k'$, then $r' < r$ and $r'k' \leq rk$.
Hence $\Xi(Y) = r'k' \leq rk = \Xi(X)$.

If $k \geq k'$, together with $r' < r$, then we have
\begin{align*}
&F(Y) - F(X)=k' \paren{r'-\frac{1}{r'}} - k \paren{r-\frac{1}{r}}\\
&<k' \paren{r'-\frac{1}{r'}} - k \paren{r'-\frac{1}{r'}}=(k'-k) \paren{r'-\frac{1}{r'}}\leq 0.
\end{align*}
We may assume that $k < k'$.
Then $r'k \leq rk < rk'$ and so
\begin{align*}
  F(Y )-F(X)=k' \paren{r'-\frac{1}{r'}} - k \paren{r-\frac{1}{r}}
  &\leq  -\frac{k'}{r'}+\frac{k}{r}< 0.
\end{align*}

%\begin{align*}
%  F(Y \supset C)-F(X)
%  &= k \paren{m-\frac{1}{m}} - k' \paren{m'-\frac{1}{m'}} \\
%  &= \frac{km^2m'-km'-(k'm'^2m-k'm)}{mm'} \\
%  &= \frac{mm'(km-k'm')+k'm-km'}{mm'} \geq 0.
%\end{align*}

\noindent
{\bf Case 8.} (In case $2.2.4$) semistable $IA+IA$, $\Delta(E_X) = A_{r_1k_1-1}+A_{r_2k_2-1}, \Delta(E_Y) = A_{r_1k_1+r_2k_2-1}$. \\
Suppose that $Q \in Y$ is a point of index $r'$ with axial weight $k'$, then $\Xi(Y) =r'k' \leq r_1k_1+r_2k_2=\Xi(X)$.
This is similar to Case 7.
From \cite[Theorem 4.5]{Mori02}, one sees $r'=\gcd (r_1,r_2)$. Together with Lemma \ref{comp_d}, we have $r'\leq \min \{r_1,r_2\}$ and $r_1\neq r_2$.
If $k_1+k_2> k'$, then
\begin{align*}
F(Y) - F(X)&=k' \paren{r'-\frac{1}{r'}} - k_1 \paren{r_1-\frac{1}{r_1}}- k_2 \paren{r_2-\frac{1}{r_2}}\\
&< \paren{k_1+k_2} \paren{r'-\frac{1}{r'}} - k_1 \paren{r_1-\frac{1}{r_1}}- k_2 \paren{r_2-\frac{1}{r_2}}\\
&=k_1\paren{r'-\frac{1}{r'}-r_1+\frac{1}{r_1}}+k_2\paren{r'-\frac{1}{r'}-r_2+\frac{1}{r_2}}\leq 0.
\end{align*}
We may assume that $k_1+k_2\leq  k'$.
Then $$\frac{k_1}{r_1}+\frac{k_2}{r_2}< \frac{k_1}{r'}+\frac{k_2}{r'}\leq \frac{k'}{r'},$$
so
\begin{align*}
F(Y) - F(X)&=k' \paren{r'-\frac{1}{r'}} - k_1 \paren{r_1-\frac{1}{r_1}}- k_2 \paren{r_2-\frac{1}{r_2}}\\
&= \paren{r'k'-r_1k_1-r_2k_2}+\paren{\frac{k_1}{r_1}+\frac{k_2}{r_2}-\frac{k'}{r'}}< 0.
\end{align*}

\end{proof}

Combining Propositions \ref{cEcDcAx}, \ref{cAxcD}, and \ref{cA}, we obtain Theorem \ref{typeIEF} which provides a partial answer to Question \ref{FtypeI}. 
\begin{thm}\label{typeIEF}
Let $f\colon X\supset C\to Y\ni Q$ be an irreducible extremal neighborhood that contracts a divisor to a curve $\Gamma$.
%Let $f \colon X \to Y \in P$ be a divisorial contraction to a curve containing a higher index point  $P \in Y$ of type $cAx/2$ or $cD/2$.
Then $\Xi(X) \ge \Xi(Y)$ and $F(X) > F(Y)$, where $\Xi(X)$ denotes the integer $\sum_i r_i$ in Theorem \ref{KaRe}.

\end{thm}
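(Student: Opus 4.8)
The plan is to establish Theorem \ref{typeIEF} by an exhaustive case analysis on the singularity type of the distinguished point $Q \in Y$, reducing every case to one of the three preceding propositions. By the classification of Mori and Reid recorded in Table \ref{ta:table1}, any non-Gorenstein terminal point of $Y$ is of exactly one of the six types $cA/r'$, $cAx/2$, $cAx/4$, $cD/2$, $cD/3$, $cE/2$. Since $\Xi$ and $F$ receive contributions only from non-Gorenstein points, and since in the extremal neighborhood the fiber $f^{-1}(Q) = C$ confines the whole comparison to a neighborhood of $Q$ (with the non-Gorenstein points of $X$ all lying on $C$), the theorem amounts to comparing the basket contribution of $Q$ against those of the points of $X$ on $C$.

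First I would dispose of the range $\mu_{X \supset C} \le 3$: Corollary \ref{exclude} shows $Y$ is then Gorenstein, so $\Xi(Y) = F(Y) = 0$ and the desired inequalities hold at once. Thus I may assume $Q$ is non-Gorenstein, and then Lemma \ref{comp_d} gives $2r \le \mu_X$, so that $\mu_{X \supset C} \ge 4$. Next, Proposition \ref{cEcDcAx} excludes the types $cE/2$, $cD/3$, and $cAx/4$ for $Q$, leaving only $cAx/2$, $cD/2$, and $cA/r'$. The first two are handled verbatim by Proposition \ref{cAxcD} and the last by Proposition \ref{cA}; each proposition delivers exactly the two assertions $\Xi(X) \ge \Xi(Y)$ and $F(X) > F(Y)$. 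Assembling these three mutually exclusive and exhaustive cases then yields the theorem.

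I do not expect a genuine obstacle at the level of the theorem itself, since all of the geometric and arithmetic content has already been packed into the three propositions; the final argument is a pure bookkeeping of admissible types. The real difficulty lies upstream, above all in Proposition \ref{cA} for a $cA/r'$ point of index $r' \ge 3$, where one must run the Chen--Hacon factorization of $f$ into $w$-morphisms and divisorial contractions to points from \cite[Theorem 3.3]{CH11} and then apply Kawakita's classification \cite[Theorem 1.2]{Kawakita2} to bound $\Xi(Y) \le m+2$; and in Proposition \ref{cEcDcAx}, whose discrepancy computation forbids the three excluded types. The only assembly subtlety worth flagging is the convention when $Q$ happens to be Gorenstein, where the strict inequality $F(X) > F(Y)$ must be read as coming from the non-Gorenstein structure on $C$ that $f$ removes.
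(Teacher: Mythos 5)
Your proposal matches the paper's argument exactly: the paper proves Theorem \ref{typeIEF} precisely by combining Propositions \ref{cEcDcAx}, \ref{cAxcD}, and \ref{cA}, with the reduction to $\mu_{X\supset C}\ge 4$ via Lemma \ref{comp_d} stated at the start of the section, so your case assembly is the intended proof. The degenerate Gorenstein case you flag (where $F(X)>F(Y)$ would fail if $X$ itself is Gorenstein along $C$) is likewise left implicit in the paper, so this is not a gap relative to its own treatment.
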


\end{document}